\newcommand{\defn}[1]{\textcolor{violet}{\emph{#1}}}
\declaretheorem[name=Theorem, numberwithin=section]{theorem}
\declaretheorem[name=Lemma, sibling=theorem]{lemma}
\declaretheorem[name=Proposition, sibling=theorem]{proposition}
\declaretheorem[name=Conjecture, sibling=theorem]{conjecture}
\def\cqedsymbol{\ifmmode$\lrcorner$\else{\unskip\nobreak\hfil
		\penalty50\hskip1em\null\nobreak\hfil$\lrcorner$
		\parfillskip=0pt\finalhyphendemerits=0\endgraf}\fi} 
\newcommand{\vphi}{\varphi}
\let\le\leqslant
\let\ge\geqslant
\let\leq\leqslant
\let\geq\geqslant
\title{On high genus extensions of Negami's conjecture}
\author[1]{Marcin Bria\'nski \thanks{Partially supported by Polish National Science Centre grant number 2019/34/E/ST6/00443.}}
\author[2]{James Davies}
\author[3]{Jane Tan}
\affil[1]{Jagiellonian University, Krak\'ow, Poland} %Department of Theoretical Computer Science, Faculty of Mathematics and Computer Science
\affil[2]{University of Cambridge, United Kingdom}%Trinity Hall
\affil[3]{University of Oxford, United Kingdom.}%All Souls College
\date{}
\begin{document}
	
	\maketitle
	
	\begin{abstract}
		Negami's famous planar cover conjecture is equivalent to the statement that a connected graph can be embedded in the projective plane if and only if it has a projective planar cover. In 1999, Hlin{\v{e}}n{\`y} proposed extending this conjecture to higher genus non-orientable surfaces. In this paper, we put forward a natural extension that encompasses orientable surfaces as well; for every compact surface $\Sigma$, a connected graph \(G\) has a finite cover embeddable in $\Sigma$ if and only if \(G\) is embeddable in a surface covered by $\Sigma$.
		
		As evidence toward this, we prove that for every surface $\Sigma$, the connected graphs with a finite cover embeddable in $\Sigma$ have bounded Euler genus. 
		Moreover, we show that these extensions of Negami's conjecture are decidable for every compact surface of sufficiently large Euler genus, surpassing what is known for Negami's original conjecture.
		We also prove the natural analogue for countable graphs embeddable into a compact (orientable) surface.
		More precisely, we prove that a connected countable graph \(G\) has a finite ply cover that embeds into a compact (orientable) surface if and only if \(G\) embeds into a compact (orientable) surface.
		
		Our most general theorem, from which these results are derived, is that there is a constant $c>0$ such that for every surface $\Sigma$, there exists a decreasing function $p_\Sigma:\mathbb{N} \to \mathbb{N}$ with $\lim_{g\to \infty}p_\Sigma(g) =0$ such that every finite cover embeddable in $\Sigma$ of any connected graph with Euler genus $g\ge c$ has ply at most $p_\Sigma(g)$.
	\end{abstract}
	
	\section{Introduction}
	
	A graph $\widehat{G}$ is a \defn{cover} of a graph $G$ if there exists a surjective map $\varphi: V(\widehat{G}) \to V(G)$ such that, for every $\widehat{v}\in V(\widehat{G})$, the neighbours of $\widehat{v}$ are mapped bijectively to the neighbours of $\vphi(\widehat{v})$. Given a subgraph $S\subset G$, the \defn{lift} $\widehat{S}$ of $S$ (into $\widehat{G}$) is the preimage $\vphi^{-1}$. When $G$ is connected and $\widehat{G}$ is finite, there is a natural number $p$ such that $|\vphi^{-1}(v)|=p$ for every $v\in V(G)$. We call this common preimage size $p$ the \defn{ply} of the cover. If $\widehat{G}$ is planar, then we say that $G$ has a \defn{planar cover}. 
	
	In 1988, Negami \cite{negami1988spherical} made the beautiful conjecture that a connected graph has a finite planar cover if and only if it can be embedded on the projective plane.
	
	\begin{conjecture}[Negami \cite{negami1988spherical}, 1988]\label{conj:Negami}
		A connected graph has a finite planar cover if and only if it is projective-planar.
	\end{conjecture}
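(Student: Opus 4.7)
The plan is to treat the two directions separately. The forward implication (projective-planar implies a finite planar cover) is the easy one: given an embedding of $G$ in the projective plane, pull it back along the universal orientable double cover $S^2 \to \mathbb{RP}^2$. This pullback is a double cover of $G$ in the graph-theoretic sense, and it inherits a combinatorial embedding in the sphere, hence is planar.

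For the converse direction I would set things up via a minor-closed class argument. First verify that the property ``admits a finite planar cover'' is closed under taking minors: deletions are handled by restricting the cover, and edge contractions are handled by contracting the entire fibre above the contracted edge (or a carefully chosen lift of it), so that the covering property is preserved. By Robertson--Seymour, the class of graphs with a finite planar cover therefore has a finite list of excluded minors. Combined with the forward implication, every such excluded minor is non-projective-planar. Hence the conjecture reduces to the reverse containment: every graph that is not projective-planar fails to admit any finite planar cover.

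Next I would invoke the fact that the minor-minimal obstructions to projective planarity form a known finite list (of size $35$). The remaining task is to show that each graph on this list has no finite planar cover. I would try to organise these obstructions by structural similarity and, for each, assume toward contradiction a hypothetical finite planar cover $\widehat{G}$. Then I would exploit the local combinatorial constraints that the covering map imposes on rotation systems: the preimage of a small non-planar substructure lifts to a configuration whose rotation system is determined up to finite choices by the rotation system of the base and the gain assignments on a spanning tree, and the Euler formula applied to $\widehat{G}$ must be consistent with the number of faces forced by these rotations.

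The main obstacle is exactly the point where all prior work has stalled: after the reductions due to Archdeacon, Fellows, Negami and Hlin\v{e}n\'y, the problem essentially comes down to a small number of stubborn obstructions, most prominently $K_{1,2,2,2}$ and graphs closely related to it. For these, a purely local rotation-system argument does not immediately close: one must rule out planar covers of \emph{every} ply, not merely small plies, and no finite case analysis alone suffices. I expect that any successful proof here will require a new global invariant of finite planar covers --- perhaps an Euler-genus lower bound for covers in terms of a refined parameter of the base graph, in the spirit of the quantitative bounds developed elsewhere in the present paper --- that can be pushed down to contradict planarity of $\widehat{G}$ for these remaining obstructions.
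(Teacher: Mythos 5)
This statement is Negami's conjecture, which the paper records as an open conjecture (\cref{conj:Negami}) and does not prove; there is no proof in the paper to compare against. Your proposal does not close the gap either, and you say so yourself. The forward direction you give is correct and standard: lifting a projective-planar embedding through the double cover $S^2 \to \mathbb{RP}^2$ yields a planar double cover. Your reduction of the converse is also essentially right, though the appeal to Robertson--Seymour is unnecessary: since the class of graphs with a finite planar cover is minor-closed and contains all projective-planar graphs, it suffices to show that none of Archdeacon's $35$ minor-minimal obstructions to projective planarity admits a finite planar cover, and one does not need a finite obstruction set for the cover property itself to make this reduction. The genuine gap is exactly the one you name: after the work of Archdeacon, Fellows, Negami and Hlin\v{e}n\'y, the single remaining case is whether $K_{1,2,2,2}$ has a finite planar cover, and this has been open since 1998. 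Ruling out covers of small ply (the current record is ply at most $13$) does not bound the ply of a putative cover, so no finite case analysis of rotation systems is known to suffice. Your concluding paragraph is an accurate description of what is missing rather than a proof of it; in particular, the quantitative ply bounds developed in this paper apply only to surfaces of sufficiently large Euler genus and do not reach the sphere or the projective plane, so they cannot be ``pushed down'' to settle this case as you suggest.
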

	
	One direction is easy: since the sphere is a double cover of the projective plane, every projective-planar graph has a finite planar cover. For the converse, the main approach has been based on the observation that if a graph $H$ has a finite planar cover, then so does every minor of $H$. Building on work of Glover, Huneke and Wang \cite{GHW79}, Archdeacon \cite{archdeacon1981kuratowski} proved a Kuratowski theorem for projective-planar graphs in 1981, characterizing embeddability on the projective plane by a list of 35 forbidden minors. To prove Negami's planar cover conjecture, it would suffice to show that none of the connected forbidden minors in this list have a planar cover. By 1998, the combined work of Archdeacon \cite{archdeacon2002two}, Hlin{\v{e}}n{\`y} \cite{hlineny1998k4}, Fellows \cite{fellows}, and Negami \cite{negami1988} had settled all but one case, reducing Negami's conjecture to whether $K_{1,2,2,2}$ has a finite planar cover. For an excellent survey on this progress, we refer to \cite{hlineny201020}. Notably, this last case has remained open even after another 25 years. 
	
	This is not to say that there have been no further advancements. Most relevantly to the present paper, Annor, Nikolayevsky and Payne \cite{annor2023counterexamples} recently proved that $K_{1,2,2,2}$ has no finite planar cover with ply at most 13. This means that any cover in a counterexample to Negami's conjecture must have ply at least 14. However, there is no known upper bound on the ply of a smallest finite planar cover of $K_{1,2,2,2}$, and consequently no upper bound on the ply of a counterexample. Such a result would be very attractive as it would render the problem finite.
	
	In 1999, Hlin{\v{e}}n{\`y} \cite{hlineny1999note} considered extensions of Negami's conjecture to higher genus surfaces. The surfaces in this paper are connected compact 2-manifolds without boundary, and hence specified by their (non)-orientability and genus as per the classification of surfaces (see \cite[Chapter 3.1]{mohar2001graphs}). Generalising planar covers, if a graph $G$ has a cover $\widehat{G}$ that is embeddable in a surface $\Sigma$, then we call this a \defn{$\Sigma$-cover}. An equivalent statement to \cref{conj:Negami} is then that a connected graph can be embedded in the projective plane if and only if it has a projective cover.
	Hlin{\v{e}}n{\`y} \cite{hlineny1999note} gave this formulation and proposed the following generalisation, where the projective plane is replaced by non-orientable surfaces of any genus.
	
	\begin{conjecture}[Hlin{\v{e}}n{\`y} \cite{hlineny1999note,hlineny1999planar}, 1999]\label{conj:Hliney}
		Let $\Sigma$ be a non-orientable surface.
		Then, a connected graph has a finite $\Sigma$-cover if and only if it is embeddable in $\Sigma$.
	\end{conjecture}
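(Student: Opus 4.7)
The direction ``embeddable in $\Sigma$ implies finite $\Sigma$-cover'' is immediate: any graph is a $1$-ply cover of itself. The content of the conjecture lies in the converse, that a connected $G$ with a finite $\Sigma$-cover $\widehat{G}$ must itself embed in $\Sigma$. The natural starting point is the approach that has driven progress on Negami's original conjecture for the projective plane.

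First, I would establish that ``having a finite $\Sigma$-cover'' is closed under taking minors. If $\vphi : \widehat{G} \to G$ is a $p$-ply cover and $e \in E(G)$, then deleting $e$ in $G$ lifts to deleting the $p$ edges in $\vphi^{-1}(e)$ in $\widehat{G}$, and contracting $e$ lifts to simultaneously contracting each of those edges; both operations preserve embeddability in $\Sigma$. Consequently, if $G$ has a finite $\Sigma$-cover then so does every minor of $G$.

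Second, I would invoke a Kuratowski-type theorem: by Robertson--Seymour (and explicitly by Archdeacon in the projective-planar case), $\Sigma$-embeddability is characterised by a finite forbidden-minor obstruction set $\mathcal{F}_\Sigma$. Combined with the previous step, it suffices to show that no connected member of $\mathcal{F}_\Sigma$ admits a finite $\Sigma$-cover. The problem is thereby reduced to a finite case analysis.

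The main obstacle is precisely this case analysis. Already for the projective plane, thirty-four of the thirty-five obstructions have been dispatched by ad hoc topological and combinatorial arguments over several decades, while $K_{1,2,2,2}$ has resisted for twenty-five years. For non-orientable surfaces of higher genus, $\mathcal{F}_\Sigma$ is vastly larger and not explicitly known, so even setting up the case analysis is hopeless one obstruction at a time. A uniform approach seems essential: I would try to exploit deck transformations of $\vphi$ together with a Riemann--Hurwitz-style comparison of Euler genera between $\widehat{G}$ and $G$ in order to bound the ply uniformly in terms of the genus gap, and thereby rule out infinite families of obstructions at once. The paper's abstract signals exactly such a uniform ply-versus-genus bound, which strongly suggests that this is the productive route, and that the genuine Negami-style case-by-case step only becomes unavoidable once the genus gap is already small.
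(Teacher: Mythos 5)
This statement is a conjecture, not a theorem: the paper does not prove it, and accordingly there is no ``paper's own proof'' against which to measure your attempt. You acknowledge this yourself, since your text is a strategic survey rather than a proof. Both of the ideas you describe are indeed present in the paper, but it is worth spelling out exactly where the paper stops short of the conjecture, since your closing paragraph slightly blurs this line.

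Your first step---minor-closedness of admitting a finite $\Sigma$-cover---appears in the paper as \cref{prop:yminorcover} (in the more general form of Y-minors). One word of caution: the edge-contraction case is more delicate than ``simultaneously contract each lifted edge.'' If $a$ and $b$ have a common neighbour $w$, then after contracting the lifted copy $\widehat{a}\widehat{b}$ the new vertex $\widehat{c}$ is typically adjacent to \emph{two} distinct preimages of $w$ (lifts of triangles in a cover need not be triangles; e.g.\ $C_6$ covers $K_3$), and the resulting graph is then not a cover of $G/ab$ in the bijective-on-neighbourhoods sense---it is only an emulator. One needs to additionally delete a set of edges between $\vphi^{-1}(c)$ and $\vphi^{-1}(w)$ chosen as a perfect matching in the bipartite multigraph between these fibres. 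The paper glosses over this point too, but it should be acknowledged rather than asserted as ``clear.''

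Your second step, reduction via forbidden minors, is correct but, as you say, cannot be carried out directly: already for the projective plane the case $K_{1,2,2,2}$ is open, and for higher-genus non-orientable surfaces the obstruction set is not even explicitly known. The genuine gap is that nothing in your outline closes this case analysis, so the conjecture remains a conjecture. Your third idea---a uniform bound on ply in terms of Euler genus---is exactly the paper's main contribution, but the mechanism is different from the Riemann--Hurwitz/deck-transformation picture you gesture at. The paper instead uses a structural theorem of Robertson and Seymour (\cref{thm:RS}, ``Excluding sums of Kuratowski graphs'') to show that any connected graph of high Euler genus contains one of a small number of \emph{sum-Kuratowski} graphs ($\Omega_{1,k}$, $\Theta_{1,k}$, $\Pi_{1,k}$, $K_{3,k}$) as a Y-minor, and then bounds the ply of any $\Sigma$-cover of these specific graphs via Euler's formula and a non-contractible-cycle counting argument (\cref{lem:3main}). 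This yields \cref{thm:finply2} and, as a consequence, \emph{decidability} of the conjecture for each surface of sufficiently high genus (\cref{thm:decidable})---a result that you correctly anticipate is where the paper is headed, but which is weaker than, and not a proof of, the conjecture itself.
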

	
	The preceding statement cannot be true for orientable surfaces since there are graphs with planar covers that have arbitrarily high orientable genus \cite{ABY63}. Nonetheless, the fact that the orientable surface of Euler genus $g$ covers itself and the non-orientable surface of Euler genus $g/2 + 1$ (see, for instance, \cite{hatcher2005algebraic}) leads to a natural analogue for orientable surfaces.
	
	\begin{conjecture}\label{con:main}
		For every orientable surface $\Sigma$ of Euler genus $g$, a connected graph has a finite $\Sigma$-cover if and only if it is embeddable in $\Sigma$ or embeddable in the non-orientable surface of genus $g/2 + 1$.
	\end{conjecture}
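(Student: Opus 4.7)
My plan is to prove the two implications separately. The easy direction follows from standard topological facts: if $G$ embeds in $\Sigma$, then $G$ is a $\Sigma$-cover of itself; if $G$ embeds in the non-orientable surface $N$ of Euler genus $g/2+1$, then since $\Sigma$ is the orientation double cover of $N$, lifting the embedding $G \hookrightarrow N$ along the covering map $\Sigma \to N$ yields an embedding into $\Sigma$ of a natural $2$-fold graph cover of $G$.

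For the hard direction, suppose $\widehat{G}$ is a finite $\Sigma$-cover of $G$ via a covering map $\varphi$. The strategy is to push this surface embedding down to an embedding of $G$ in a quotient of $\Sigma$. First, I would replace $\widehat{G}$ by its Galois closure if necessary, so that the deck transformation group $\Gamma$ of $\widehat{G} \to G$ acts freely and transitively on each fibre. Second, I would realise $\Gamma$ as a group of homeomorphisms of $\Sigma$ that preserve the embedded $\widehat{G}$ and extend its combinatorial action; for $3$-connected $\widehat{G}$ this should follow from uniqueness of minimum-genus embeddings, while the general case calls for decomposition along small cutsets and a piecewise extension. Third, provided the $\Gamma$-action on $\Sigma$ is free, the quotient $\Sigma' = \Sigma/\Gamma$ is a surface covered by $\Sigma$ into which $G$ embeds as $\varphi(\widehat{G})$. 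The classification of unbranched quotients of an orientable surface of Euler genus $g$ then forces $\Sigma'$ either to embed in $\Sigma$ or to be exactly $N$.

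The central obstacle is ensuring that the extension of $\Gamma$ acts \emph{freely} on $\Sigma$. A priori, the topological extension could have fixed points (branch points), in which case the quotient is not an unbranched surface cover and $G$ could, in principle, end up embedded in a surface outside the allowed list. Avoiding branching likely requires modifying the embedding of $\widehat{G}$ near cutsets, enlarging the cover to suppress torsion in the deck group, or leveraging the bounded-Euler-genus result highlighted in the abstract to restrict attention to a regime where the branching data is controllable. A complementary low-genus difficulty is that a genus bound alone does not distinguish embeddability in $\Sigma$ from embeddability in $N$ from neither, so a finite Kuratowski-style check in the spirit of the work on Negami's original conjecture for $K_{1,2,2,2}$ is likely unavoidable for the finitely many residual Euler genera, and this is the step I would expect to be the hardest to make unconditional.
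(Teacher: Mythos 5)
This statement is \Cref{con:main}, which the paper poses as an open conjecture and does \emph{not} prove; the paper provides only partial evidence (bounded Euler genus for graphs with finite $\Sigma$-covers, and decidability for high-genus $\Sigma$), so there is no ``paper proof'' for your attempt to be compared against. Your treatment of the easy direction is correct: an embedding into $\Sigma$ gives a $1$-ply $\Sigma$-cover, and an embedding into the non-orientable surface $N$ of Euler genus $g/2+1$ lifts along the orientation double cover $\Sigma \to N$ to an embedding of a $2$-ply cover in $\Sigma$.

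The hard direction of your sketch, however, has gaps that are not merely technical but structural. First, passing to the Galois closure is not available: the Galois closure of a $p$-fold cover can have up to $p!$ sheets, and there is no reason this much larger graph still embeds in the fixed surface $\Sigma$; you would be discarding the one topological hypothesis you have. Second, and more fundamentally, the step of realising the deck group $\Gamma$ as homeomorphisms of $\Sigma$ preserving the embedded $\widehat{G}$ has no justification. The hypothesis is only that $\widehat{G}$ admits \emph{some} embedding in $\Sigma$, and that embedding carries no a priori relationship to the combinatorial covering map $\varphi$; the deck transformations are automorphisms of the abstract graph $\widehat{G}$, and in general they do not preserve the rotation system of the given embedding. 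Whitney-type uniqueness of embeddings for $3$-connected graphs is special to the sphere; on higher-genus surfaces even $3$-connected graphs typically have many inequivalent embeddings, and there is no ``minimum-genus'' hypothesis in play since the embedding of $\widehat{G}$ in $\Sigma$ need not be genus-minimal. This is precisely the core obstruction in Negami's original conjecture: if one could always upgrade a combinatorial planar cover to a topological one, the $K_{1,2,2,2}$ case would already be settled. So the ``push the embedding down to a quotient surface'' route is exactly the one that has resisted all attempts, and nothing in your outline circumvents it. Finally, even granting the quotient $\Sigma' = \Sigma/\Gamma$, you also correctly note but do not resolve the branch-point problem, which can make $\Sigma'$ an orbifold rather than a surface. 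In short, the sketch rediscovers the natural first idea and the reasons it is hard, but does not supply the missing mechanism; the paper's own, much weaker, contribution is to show via excluded-minor structure theory that finite $\Sigma$-covers force bounded Euler genus and that the conjecture is decidable for large-genus $\Sigma$, not to prove \Cref{con:main} itself.
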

	
	We also propose the following conjecture that unifies \cref{conj:Hliney} and \cref{con:main}.
	
	\begin{conjecture}\label{con:main2}
		For every surface $\Sigma$ of Euler genus $g$,
		a connected graph has a finite $\Sigma$-cover if and only if it is embeddable in a surface finitely covered by $\Sigma$.
	\end{conjecture}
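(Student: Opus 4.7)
The easy direction is topological: if $\pi \colon \Sigma \to \Sigma'$ is a finite $p$-sheeted unbranched cover of surfaces and $G$ embeds in $\Sigma'$, then $\pi^{-1}(G) \subset \Sigma$ is a graph embedded in $\Sigma$ and $\pi$ restricts to a $p$-ply graph cover $\pi^{-1}(G) \to G$. I would dispatch this with a short argument and focus on the difficult direction.

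For the difficult direction, suppose a connected graph $G$ admits a finite $\Sigma$-cover $\widehat{G}$ of ply $p$. The paper's bounded Euler genus theorem tells us that the Euler genus of $G$ is at most some $g^{*} = g^{*}(\Sigma)$, and the main theorem further gives $p \le p_\Sigma(g(G))$ whenever $g(G) \ge c$, with $p_\Sigma(g) \to 0$ as $g \to \infty$. So only finitely many pairs $(g(G), p)$ with $g(G) \ge c$ can arise. For each such pair, Riemann--Hurwitz restricts the possible quotient surfaces $\Sigma'$ with $\Sigma$ a $p$-sheeted cover of $\Sigma'$ to finitely many candidates, reducing the problem to a finite case analysis. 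The central step is then to ``descend'' the embedding of $\widehat{G}$ into $\Sigma$ by extending the deck transformations of $\widehat{G} \to G$ to homeomorphisms of $\Sigma$ and quotienting: when the cover is regular and this extension works, $G$ embeds into the quotient surface $\Sigma/A$, which is finitely covered by $\Sigma$ as required.

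The main obstacle is twofold. First, not every finite cover is regular, and even when it is, extending the deck group action on $\widehat{G}$ to a free action on $\Sigma$ is nontrivial (this is essentially Negami's original ``uniform embedding'' strategy); one might hope to circumvent irregular covers by passing to the Galois closure, but this may not embed in $\Sigma$. Second, and more seriously, the main theorem is vacuous for $g(G) < c$, and this small-genus regime subsumes Negami's original conjecture: the spherical case requires showing that every connected graph with a planar cover embeds in the projective plane, which remains famously open for $K_{1,2,2,2}$. A full proof therefore either awaits progress on Negami, or succeeds via a reduction showing that any minimal counterexample must have Euler genus at least $c$, placing it within the range where the quantitative ply bound of the main theorem applies.
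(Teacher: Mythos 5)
The statement you were given is labelled as a conjecture in the paper, and the authors do not prove it; the paper's contributions are the partial results surrounding it (bounded Euler genus, a ply bound decreasing in the genus of the graph, decidability for high-genus surfaces, and a countable analogue). So there is no proof to compare against, and your conclusion that the conjecture remains genuinely open is the correct one. Your two identified obstacles are the right ones: the deck-transformation/uniform-embedding strategy fails for irregular covers and is delicate even for regular ones, and the small-genus regime subsumes Negami's conjecture, still open for $K_{1,2,2,2}$. The partial progress you describe tracks the paper almost exactly: bounded Euler genus is \cref{thm:finplyEulergenus}, the decreasing ply bound is \cref{thm:finply2}, the resulting finite case analysis is \cref{thm:decidable}, and the argument does close in the countable setting (\cref{thm:finply_countable}), where taking $k=\infty$ forces the ply bound to vanish.

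Two small corrections. First, you use the same symbol $p$ for the ply of the graph cover $\widehat{G}\to G$ and for the sheet number of a putative surface cover $\Sigma\to\Sigma'$; these coincide only when the graph cover is induced by a surface cover, which is exactly what the uniform-embedding program tries (and in general fails) to establish. Riemann--Hurwitz, i.e.\ multiplicativity of Euler characteristic under unbranched covers, constrains sheet numbers of surface covers, not plies of graph covers. Second, showing that a minimal counterexample has Euler genus at least $c$ would not by itself yield a proof: it reduces the conjecture for that $\Sigma$ to a finite search, which could in principle turn up a counterexample. That reduction is precisely \cref{thm:decidable} --- decidability, not truth.
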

	
	In this paper, we provide evidence toward \cref{con:main2} and a theoretical framework for verifying it. Our main theorem, from which will derive several key consequences, does what has not yet been achieved for Negami's conjecture and provides an upper bound on the ply of a $\Sigma$-cover for graphs with sufficiently high Euler genus.
	
	\begin{theorem}
		\label{thm:finply2}
		There exists a constant\footnote{We do not obtain an explicit value for the constant $c$ given in \cref{thm:finply2} since our proof relies on a theorem of Robertson and Seymour \cite{robertson24b} (see \cref{thm:RS}) which does not give explicit bounds. This constant is computable, however, since the bounds used from \cref{thm:RS} can also be obtained by computing forbidden minors for surfaces with small Euler genus.} $c>0$ such that the following holds. If $G$ is a connected graph with Euler genus $g\ge c$, then every finite cover of $G$ that is embeddable in a surface $\Sigma$ has ply at most $p_\Sigma(g)$, where $p_\Sigma:\mathbb{N} \to \mathbb{N}$ is a decreasing function depending only on $\Sigma$ with $\lim_{g\to \infty}p_\Sigma(g) =0$.
	\end{theorem}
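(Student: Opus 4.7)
The idea is to convert the graph-theoretic cover $\widehat{G}\to G$ into a topological branched covering of surfaces and extract the ply bound from Riemann-Hurwitz. Since $G$ is connected, I may assume $\widehat{G}$ is connected by restricting to one component (which does not increase the ply). Fix a minimum-Euler-genus embedding of $G$ in its Euler-genus surface $S$, so $S$ has Euler genus~$g$. Via the Gross-Tucker theory of (permutation) voltage graphs, the given cover $\widehat{G}\to G$ then extends to a branched covering of surfaces $\widehat{S}\to S$ in which $\widehat{G}$ is 2-cell embedded in $\widehat{S}$. Applying Riemann-Hurwitz, $\chi(\widehat{S})=p\chi(S)-R$ with $R\ge 0$, so rearranging, the Euler genus of $\widehat{S}$ is at least $p(g-2)+2$.

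The crucial step is to pass from ``$\widehat{G}$ sits inside a surface $\widehat{S}$ of large Euler genus'' to ``the Euler genus of $\widehat{G}$ itself is large''. Once the Euler genus of $\widehat{G}$ is at least $p(g-2)+2-O(1)$, combining with the hypothesis that $\widehat{G}$ embeds in $\Sigma$ gives $p\le (g_\Sigma+O(1))/(g-2)$; defining $p_\Sigma(g)$ to be this (decreasing) upper bound yields the theorem. To force this near-equality of the Euler genus of $\widehat{G}$ with that of $\widehat{S}$, I would invoke the cited Robertson-Seymour theorem \cref{thm:RS} to guarantee, whenever $g\ge c$ is sufficiently large, a minimum-Euler-genus embedding of $G$ in $S$ with face-width (representativity) exceeding a prescribed threshold depending only on $\Sigma$. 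Face-width of the lifted embedding of $\widehat{G}$ in $\widehat{S}$ is then bounded below, and a Robertson-Vitray-style uniqueness theorem for high face-width embeddings implies that $\widehat{G}$ admits no alternative embedding of significantly smaller Euler genus.

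The main obstacle is a mild circularity: the face-width threshold required by the uniqueness step depends on the Euler genus of $\widehat{S}$, which itself depends on the unknown ply $p$. I would address this through a bootstrapping argument: first derive a crude preliminary bound on $p$ by applying the Euler-formula inequality $E(\widehat{G})\le 3V(\widehat{G})+3g_\Sigma-6$ together with $V(\widehat{G})=pV(G)$, $E(\widehat{G})=pE(G)$ (after reducing to a ``dense kernel'' of $G$ that still witnesses the full Euler genus), then feed this bound into \cref{thm:RS} to pick the correct face-width threshold, and finally sharpen to obtain the claimed decreasing $p_\Sigma(g)$. A secondary technical point is to verify that the face-width of the lift $\widehat{G}\hookrightarrow\widehat{S}$ inherits the face-width of the base embedding $G\hookrightarrow S$; this requires some care around the branch points of $\widehat{S}\to S$, which are located at the centers of faces where the ramification index exceeds~$1$.
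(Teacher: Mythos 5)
Your approach is genuinely different from the paper's, but it has a gap that I don't see how to close. The Gross--Tucker / Riemann--Hurwitz step is fine: given a fixed embedding of $G$ in $S$ of Euler genus $g$, the graph cover $\widehat{G}\to G$ does lift to an embedding of $\widehat{G}$ in a derived surface $\widehat{S}$ with $\chi(\widehat{S})\le p\,\chi(S)$, so $\widehat{S}$ has Euler genus at least $p(g-2)+2$. The problem is exactly the step you flag as ``crucial'': $\widehat{G}\hookrightarrow\widehat{S}$ need not be anywhere near a minimum-genus embedding of $\widehat{G}$, and the tool you invoke to bridge this does not exist in the form you need. First, \cref{thm:RS} is a structure theorem asserting bounded Euler genus in the absence of certain sum-Kuratowski minors; it says nothing about face-width, so ``invoke \cref{thm:RS} to guarantee a minimum-Euler-genus embedding of $G$ with face-width exceeding a prescribed threshold'' is a misreading. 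Second, the needed statement is simply false: connected graphs of arbitrarily high Euler genus can have minimum-genus embeddings of face-width at most $2$. The graphs $\Omega_{1,k}$ (many $K_5$'s identified at a single vertex) have Euler genus growing linearly in $k$, yet have a cut vertex, forcing every embedding to have face-width at most $2$. So the Robertson--Vitray-type uniqueness hypothesis can never be met for these graphs, and they are precisely some of the graphs the argument must handle. The bootstrap idea has a similar issue: the crude Euler-formula bound $p\bigl(E(G)-3V(G)\bigr)\le 3g_\Sigma-6$ gives nothing when $G$ is sparse (e.g.\ $\Omega_{1,k}$, which has $E\approx 2.5\,V$), and the ``dense kernel'' reduction is not spelled out.

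The paper proceeds differently and avoids this entirely. It uses \cref{thm:RS} (via \cref{thm:Yminorfinite}) to show that high Euler genus forces one of four concrete Y-minors $\Omega_{1,k}$, $\Theta_{1,k}$, $\Pi_{1,k}$, $K_{3,k}$ with $k$ growing; \cref{prop:yminorcover} transfers the bounded-ply $\Sigma$-cover down to that Y-minor; and then \cref{lem:3main} bounds the ply of any $\Sigma$-cover of those specific graphs directly, using Euler's formula applied to carefully chosen bipartite minors of the cover together with topological facts about non-contractible cycles (\cref{lem:boundingnoncontractible}) and the non-existence of planar covers of $\Omega_{1,2},\Theta_{1,2}$ (\cref{noplanarcover}). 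That is, the ply is controlled by exploiting the explicit local structure of the excluded graphs rather than by a genus-of-the-cover estimate; this sidesteps any face-width or embedding-uniqueness hypotheses. If you want to salvage a Riemann--Hurwitz style argument, you would need a replacement for the face-width step that works for low-connectivity graphs, and I do not see one.
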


	The first significant implication of \cref{thm:finply2} is that graphs with finite $\Sigma$-covers have bounded Euler genus. Fix a surface $\Sigma$, and choose $g$ sufficiently large so that $g \ge c$ and $p_\Sigma(g) < 1$. The latter is possible since $\lim_{g\to \infty}p_\Sigma(g) =0$.
	With these parameters, \cref{thm:finply2} implies that any connected graph $G$ with Euler genus at least $g$ does not have a finite $\Sigma$-cover.
	This was previously only known in the cases when $\Sigma$ is the sphere or projective plane \cite{hlineny2001another,hlineny2004possible}. 
	\begin{theorem}
		\label{thm:finplyEulergenus}
		For every surface $\Sigma$, the connected graphs with a finite $\Sigma$-cover have bounded Euler genus.
	\end{theorem}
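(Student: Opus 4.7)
The plan is to derive this corollary directly from \cref{thm:finply2}; the preceding paragraph of the paper essentially sketches the argument already, so my proposal simply fleshes this out.

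First, I fix an arbitrary surface $\Sigma$ and invoke \cref{thm:finply2} to obtain the absolute constant $c>0$ and the decreasing function $p_\Sigma:\mathbb{N}\to\mathbb{N}$ tailored to $\Sigma$, satisfying $\lim_{g\to\infty}p_\Sigma(g)=0$. Since $p_\Sigma$ tends to $0$, there exists a threshold $g^\star\ge c$ with $p_\Sigma(g^\star)<1$; I fix such a $g^\star$ (depending only on $\Sigma$).

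Next, I suppose for contradiction that some connected graph $G$ with Euler genus at least $g^\star$ admits a finite $\Sigma$-cover $\widehat{G}$. Applying \cref{thm:finply2} with $g$ equal to the Euler genus of $G$ (which is at least $g^\star\ge c$, and since $p_\Sigma$ is decreasing, $p_\Sigma(g)\le p_\Sigma(g^\star)<1$), I conclude that the ply of the cover $\widehat{G}\to G$ is at most $p_\Sigma(g)<1$. However, by definition the ply of a finite cover of a connected graph is a positive integer, and in particular is at least $1$, a contradiction. Hence no such $G$ exists, and every connected graph with a finite $\Sigma$-cover has Euler genus strictly less than $g^\star$, giving the desired bound.

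There is essentially no obstacle in this argument beyond correctly quoting \cref{thm:finply2}: the two properties of $p_\Sigma$ that are needed are precisely that it is decreasing and tends to $0$, both of which are explicit in the statement of \cref{thm:finply2}. The only minor point to verify is that the ply of a finite cover of a connected graph is a positive integer (which is immediate from the definition given at the start of the introduction, since every vertex has at least one preimage).
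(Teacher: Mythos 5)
Your argument is correct and is exactly the argument given in the paper (which the statement's preceding paragraph spells out: choose $g\ge c$ with $p_\Sigma(g)<1$, then \cref{thm:finply2} rules out any finite $\Sigma$-cover for connected graphs of Euler genus at least $g$). You have just added the small but valid observation that the ply of a nonempty finite cover is an integer at least $1$, which makes the contradiction explicit.
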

	
	The second significant implication of \cref{thm:finply2} is that \cref{con:main2} is decidable for each surface of Euler genus at least $c$. To see this, let us fix such a surface $\Sigma$. A result of Seymour \cite{seymour1993bound} gives an explicit upper bound on the size of a minimal forbidden minor of a graph embeddable in any fixed surface. 
	Since there are algorithms for testing if a given graph is embeddable in a given surface (see \cite{mohar1999linear}), one can find in finite time the minimal forbidden minors for any surface covered by $\Sigma$.
	Given the forbidden minors for two classes of graphs (the classes of graphs embeddable in two surfaces covered by $\Sigma$ respectively, say), one can also find in finite time the minimal forbidden minors for the union of the two classes \cite{robertson2009graph}. Putting this together, we can find in finite time the (finite list of) minimal forbidden minors $\mathcal{H}_\Sigma$ that characterise the class of graphs embeddable in some surface covered by $\Sigma$.
	
	To prove (or disprove) \cref{con:main2} for $\Sigma$, it is then enough to consider the graphs in $\mathcal{H}_\Sigma$. 
	Suppose that all graphs in $\mathcal{H}_\Sigma$ have at most $n_\Sigma$ vertices.
	Then, one can enumerate all graphs with at most $n_\Sigma p_\Sigma$ vertices and test one by one whether they are embeddable in $\Sigma$ and whether they cover a graph in $\mathcal{H}_\Sigma$.
	If such a graph exists, then it (along with its embedding in $\Sigma$) provides a counter-example to \cref{con:main2} for the surface $\Sigma$.
	Otherwise, by \cref{thm:finply2},  if no such graph exists then this algorithm provides a proof of \cref{con:main2} for the surface $\Sigma$.
	Thus, we obtain:
	
	\begin{theorem}\label{thm:decidable}
		For each surface $\Sigma$ of Euler genus at least $c$, there is a finite time algorithm that determines whether or not \cref{con:main2} is true for the surface $\Sigma$, and produces a counter-example if it its not true.
	\end{theorem}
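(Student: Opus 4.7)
The plan is to formalise the algorithmic outline sketched just before the statement, turning each ingredient into an effective subroutine. First I would enumerate the finite collection of surfaces (up to homeomorphism) covered by $\Sigma$; this list is finite and computable since any cover has Euler genus at least that of the base, so only finitely many surfaces qualify. For each such surface $\Sigma'$, Seymour's bound \cite{seymour1993bound} gives an explicit upper bound on the number of vertices in any minimal forbidden minor for embeddability in $\Sigma'$, which combined with Mohar's embedding algorithm \cite{mohar1999linear} lets me enumerate those forbidden minors in finite time. Applying the effective construction for the union of minor-closed classes from \cite{robertson2009graph} then yields the finite list $\mathcal{H}_\Sigma$ of minimal forbidden minors characterising graphs embeddable in some surface covered by $\Sigma$.

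Next I would reduce \cref{con:main2} for $\Sigma$ to a finite check on $\mathcal{H}_\Sigma$. Since having a finite $\Sigma$-cover is minor-closed, any counterexample $G$ contains some $H \in \mathcal{H}_\Sigma$ as a minor, and this $H$ has a finite $\Sigma$-cover while by definition failing to embed in any surface covered by $\Sigma$; so $H$ itself is a counterexample. Hence \cref{con:main2} fails for $\Sigma$ if and only if some $H \in \mathcal{H}_\Sigma$ has a finite $\Sigma$-cover. Each $H \in \mathcal{H}_\Sigma$ fails to embed in $\Sigma$ (since $\Sigma$ trivially covers itself), so $\mathrm{eg}(H) > \mathrm{eg}(\Sigma) \ge c$, and \cref{thm:finply2} bounds the ply of any finite $\Sigma$-cover of $H$ by $p_\Sigma(\mathrm{eg}(H))$. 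In particular such a cover has at most $n_\Sigma p_\Sigma$ vertices, where $n_\Sigma$ is the maximum of $|V(H)|$ and $p_\Sigma$ the maximum of $p_\Sigma(\mathrm{eg}(H))$ taken over $H \in \mathcal{H}_\Sigma$.

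The algorithm then enumerates all graphs on at most $n_\Sigma p_\Sigma$ vertices and for each checks, in finite time, whether it is a cover of some $H \in \mathcal{H}_\Sigma$ and whether it embeds in $\Sigma$ (again via \cite{mohar1999linear}). If some graph passes both tests, the algorithm outputs it together with its embedding as a counter-example; otherwise the exhaustive search, combined with \cref{thm:finply2}, certifies that \cref{con:main2} holds for $\Sigma$. The one point requiring care — and the main subtlety of the proof — is that the procedure needs the function $p_\Sigma$ and the constant $c$ from \cref{thm:finply2} to be effectively computable; the footnote to \cref{thm:finply2} already addresses this, explaining that the Robertson--Seymour bounds appealed to there can be extracted by computing forbidden minors for surfaces of small Euler genus, which is itself a finite computation.
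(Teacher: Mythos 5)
Your proposal follows the paper's outline closely: compute $\mathcal{H}_\Sigma$ using Seymour's bound, Mohar's embedding algorithm, and the union-of-minor-closed-classes construction from \cite{robertson2009graph}; reduce \cref{con:main2} to checking whether some $H\in\mathcal{H}_\Sigma$ has a finite $\Sigma$-cover; bound the ply using \cref{thm:finply2}; and then enumerate. This is the same route as the paper.

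There is, however, a genuine error in one step. You assert that since each $H \in \mathcal{H}_\Sigma$ fails to embed in $\Sigma$, it follows that $H$ has Euler genus strictly greater than that of $\Sigma$, hence at least $c$. That implication is false when $\Sigma$ is orientable: a graph can fail to embed in an orientable surface of Euler genus $g$ while its Euler genus is far smaller, attained only in a non-orientable surface. For instance, by \cite{ABY63} there are projective-planar graphs (Euler genus $1$) of arbitrarily large orientable genus, and these do not embed in any fixed orientable $\Sigma$ once their orientable genus exceeds that of $\Sigma$. The observation you actually need is that $H$ fails to embed in \emph{every} surface covered by $\Sigma$, in particular every non-orientable one; since the orientable surface of Euler genus $g$ double-covers the non-orientable surface of Euler genus $g/2+1$, this gives $H$ Euler genus at least $g/2+2$, not $g+1$. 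Consequently, to invoke \cref{thm:finply2} one needs $g/2+2\ge c$, i.e.\ roughly $g\ge 2c$, rather than merely $g\ge c$. The paper's own sketch elides the estimate on the Euler genus of members of $\mathcal{H}_\Sigma$, so it is terse on exactly this point, but the inequality you write down does not hold and the derivation should either use the weaker genus bound just described and adjust the threshold on $\Sigma$ accordingly, or replace ``fails to embed in $\Sigma$'' by ``fails to embed in any non-orientable surface covered by $\Sigma$.''
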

	
	The caveat here is that the algorithm described above will in general require a huge number of steps, so it does not directly give a practical answer to our question.
	It may be possible, however, to extend the ideas of the present paper to improve \cref{thm:finply2} and settle \cref{con:main2} for at least some surfaces. In addition, the preceding theorem is already in contrast to Negami's conjecture, where the question of decidability is still open. It is interesting that this result brings us closer in some sense to proving high genus analogues of Negami's conjecture than proving Negami's conjecture itself, despite the latter being reduced to essentially just one remaining case.
	One other famous open conjecture in graph minor theory which is known to be decidable (for each $k$) is Hadwiger's conjecture \cite{kawarabayashi2009hadwiger}.

	The proof of \cref{thm:finply2} crucially uses a structural theorem of Robertson and Seymour \cite{robertson24b} (see \cref{thm:RS}) concerning excluded minors, which similarly to Archdeacon's theorem \cite{archdeacon1981kuratowski} for projective-planar graphs allows us to reduce to studying covers of a very specific family of graphs. While we have so far worked with finite graphs, these tools all have counterparts for infinite (countable) graphs that are actually even stronger. Hence, the same outline proves an analogue of Negami's conjecture for countable graphs embeddable in a compact surface.
	For this, we use a recent forbidden minor theorem of Georgakopoulos \cite{georgakopoulos2023excluded} (see \cref{thm:Georgakopoulos}).
	
	\begin{theorem}
		\label{thm:finply_countable}
		A connected countable graph $G$ has a finite ply cover that embeds into a compact (orientable) surface if and only if $G$ embeds into a compact (orientable) surface.
	\end{theorem}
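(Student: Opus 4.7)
The plan is to follow the same outline as for \cref{thm:finplyEulergenus}, replacing the finite-graph forbidden minor results with Georgakopoulos's countable analogue (\cref{thm:Georgakopoulos}). The forward direction is trivial: $G$ is its own ply-$1$ cover. For the reverse, let $\varphi : \widehat{G} \to G$ be a finite ply cover with $\widehat{G}$ embedded in a compact (orientable) surface $\Sigma$, and let $\mathcal{F}$ be the finite list of forbidden minors for countable graphs embeddable in a compact (orientable) surface given by \cref{thm:Georgakopoulos}. It suffices to show $G$ has no minor from $\mathcal{F}$.

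Suppose for contradiction that some $H \in \mathcal{F}$ is a minor of $G$. First, I would establish a minor-lifting lemma: performing the minor operations realising $H$ as a minor of $G$ on $\widehat{G}$ (delete all preimages of deleted vertices and edges, contract all lifts of contracted edges) produces a cover $\widehat{H}$ of $H$ of the same ply that is itself a minor of $\widehat{G}$, hence embeds in $\Sigma$. Operation by operation, the preservation of cover structure and ply is routine; the subtlety is to verify well-definedness when $H$ is infinite and countably many operations are required.

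Given this, for any connected finite subgraph $H_n \subseteq H$, the restriction $\varphi^{-1}(H_n) \subseteq \widehat{H}$ is a finite cover of $H_n$ embedded in $\Sigma$. Fixing $g \ge c$ with $p_\Sigma(g) < 1$, \cref{thm:finply2} forces the Euler genus of $H_n$ to be strictly less than $g$. Since $H$ is connected, every finite subgraph of $H$ sits inside a connected finite subgraph of $H$, so in fact all finite subgraphs of $H$ have Euler genus less than $g$. A standard compactness argument for embeddings of infinite graphs (essentially the easy direction of \cref{thm:Georgakopoulos}) then yields an embedding of $H$ in a compact surface of Euler genus at most $g$, contradicting $H \in \mathcal{F}$. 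The orientable and general cases are handled identically, using the appropriate version of $\mathcal{F}$.

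The main obstacle I anticipate is the minor-lifting lemma for possibly infinite minors: each individual minor operation trivially lifts and preserves ply, but one must verify that the cover structure and finite ply survive the countably infinite sequence of operations needed to realise a general countable minor, and that the resulting limit graph is genuinely a finite ply cover of $H$. Once this lemma is in hand, the rest of the argument is a direct analogue of the derivation of \cref{thm:finplyEulergenus} from \cref{thm:finply2}.
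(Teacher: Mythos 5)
Your proposal is in the right spirit and uses the same basic ingredients (a forbidden minor characterisation of countable graphs embeddable in a compact surface, together with the Euler genus bound from \cref{thm:finply2}), but it takes a more circuitous route than the paper and has a couple of technical soft spots worth flagging.

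The paper's proof avoids both obstacles you anticipate. Instead of lifting the full (infinite) minor $H$ of $G$ to a cover $\widehat{H}$, it works only with \emph{finite} Y-minors: it derives a connected Y-minor analogue of \cref{thm:Georgakopoulos} (namely \cref{thm:Yminorcountable}, with the four families $\Omega_{1,\infty}$, $\Theta_{1,\infty}$, $\Pi_{1,\infty}$, $K_{3,\infty}$), observes that each truncation $\Omega_{1,k}$, $\Theta_{1,k}$, $\Pi_{1,k}$, $K_{3,k}$ is a finite Y-minor of $G$, applies \cref{prop:yminorcover} to transfer the $p$-ply $\Sigma$-cover of $G$ down to these finite graphs, and then invokes \cref{lem:3main} directly to get $p\le \frac{28g}{k-24}$; letting $k\to\infty$ gives $p=0$, a contradiction. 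Crucially, since the targets are finite graphs, only a finite sequence of nontrivial Y-minor operations is ever performed (discarding all of $G$ outside a finite subgraph is a single restriction and is harmless), so the issue of a countably infinite sequence of lifting operations never arises.

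The same trick would simplify your argument as well: you don't need the infinite minor-lifting lemma at all. A finite connected subgraph $H_n\subseteq H$ is already a finite minor of a finite connected subgraph of $G$, so \cref{prop:yminorcover} applied to finitely many operations (plus the benign restriction) immediately produces a finite ply $\Sigma$-cover of $H_n$, and \cref{thm:finply2} then bounds the Euler genus of $H_n$. But at that point the compactness step is also superfluous, and in fact mislabeled: what you call ``the easy direction of \cref{thm:Georgakopoulos}'' is not what you need (the easy direction is ``embeddable $\Rightarrow$ no forbidden minor''); you would need either a separate König-type compactness argument for surface embeddings of countable graphs, or the \emph{hard} direction of \cref{thm:Georgakopoulos}. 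Neither is necessary, because once you know every finite subgraph of $H$ has Euler genus $<g$, you already have a contradiction: each of the forbidden graphs $\Lambda_{1,\infty}$, $\Omega_{1,\infty}$, $\Theta_{1,\infty}$, \ldots contains finite subgraphs (e.g.\ $\Omega_{1,k}$) of arbitrarily large Euler genus. With those two adjustments your argument becomes essentially the paper's proof, filtered through \cref{thm:finply2} rather than through \cref{lem:3main} directly.
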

	
	Note that the surfaces in \cref{thm:finply_countable} can be restricted to orientable ones since every countable graph embeddable in a compact surface is embeddable in a compact orientable surface \cite{georgakopoulos2023excluded}.
	
	In \cref{sec:surfaceminors}, we discuss the aforementioned structural results for graphs on surfaces in both the finite and countable cases. At the end of that section, we state the remaining result connecting structure to ply that we need to prove \cref{thm:finply2} and \cref{thm:finply_countable} and give these arguments. The purpose of \cref{sec:sumkuratowski} is then fill in this last result, which entails verifying that all $\Sigma$-covers of the excluded graphs from our structural results have bounded ply. For basic concepts of topological graph theory we follow \cite{mohar2001graphs}, and for standard facts about surfaces we refer to \cite{hatcher2005algebraic}. 
	
	\section{Surfaces and forbidden minors}
	\label{sec:surfaceminors}

	When dealing with embedded graphs, a useful operation that preserves many topological properties is the $Y\Delta$-transformation, which is performed by deleting a vertex $v$ of degree 3 and adding three edges between the vertices of $N(v)$ to form a triangle. We will study a slightly more general relation that still works well with covers. Say that a graph $G$ is a Y-minor of a graph $H$ if $G$ can be obtained from $H$ by any sequence of the following operations: adding an edge between two neighbours of a degree 3 vertex, deleting a vertex, deleting an edge, and contracting an edge. This is slightly more general than the usual notion of minors (in particular, note that if $H$ is a minor of $G$, then $H$ is also a Y-minor of $G$).
	
	\begin{proposition}\label{prop:yminorcover}
		If a graph $H$ has a $p$-ply cover in a surface $\Sigma$, then so does every Y-minor of $H$.
	\end{proposition}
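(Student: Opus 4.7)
The plan is to handle each of the four operations defining the Y-minor relation separately, showing that each preserves the property of admitting a $p$-ply $\Sigma$-cover. Since Y-minors are obtained by finite sequences of these operations, the proposition then follows by induction on the number of operations applied. Throughout, let $\widehat{H}$ denote a given $p$-ply cover of $H$ embedded in $\Sigma$, with covering map $\vphi$.

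The three standard minor operations are routine. For edge deletion of $uv \in E(H)$, remove from $\widehat{H}$ all edges in $\vphi^{-1}(uv)$; the resulting graph embeds in $\Sigma$ (edges are merely deleted) and is easily checked to be a $p$-ply cover of $H-uv$ since each $\widehat{v}$ now has neighbourhood mapped bijectively onto $N_{H-uv}(\vphi(\widehat{v}))$. Vertex deletion is analogous: remove the $p$ lifts of $v$ together with their incident edges. For the contraction of an edge $uv \in E(H)$, the set $\vphi^{-1}(uv)$ forms a perfect matching between $\vphi^{-1}(u)$ and $\vphi^{-1}(v)$ by the cover property; contract each of these $p$ edges independently. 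Edge contraction is compatible with the embedding, and it is straightforward to verify that the contracted cover is a $p$-ply cover of $H/uv$.

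The only genuinely nontrivial case is the Y-operation: adding an edge $xy$ where $x, y$ are two neighbours of a degree-3 vertex $v$. For each lift $\widehat{v}$ of $v$, the cover property ensures that $\widehat{v}$ has exactly one neighbour projecting to each of $x, y, z$ (where $N_H(v) = \{x, y, z\}$); call these $\widehat{x}_{\widehat{v}}$, $\widehat{y}_{\widehat{v}}$, $\widehat{z}_{\widehat{v}}$. I would add to $\widehat{H}$ the edge $\widehat{x}_{\widehat{v}}\widehat{y}_{\widehat{v}}$ for every lift $\widehat{v}$ of $v$. A short check confirms this produces a $p$-ply cover of $H+xy$: each lift of $x$ gains exactly one neighbour projecting to $y$ (provided by the unique lift of $v$ adjacent to it), matching the single new neighbour $y$ that $x$ gained in $H + xy$.

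The remaining point, and the subtlest part of the argument, is that the new edges in the cover can be drawn in the existing embedding in $\Sigma$ without crossings. The key observation is that because $v$ has degree $3$, the three edges $vx, vy, vz$ partition a small disc neighbourhood of $v$ in $\Sigma$ into three corners; in particular, $x$ and $y$ appear consecutively on the boundary of a face incident to $v$, so the edge $xy$ can be drawn along a curve hugging the path $x$--$v$--$y$ within that face. The same local picture occurs around every lift $\widehat{v}$ in the embedding of $\widehat{H}$ in $\Sigma$, and the new edges $\widehat{x}_{\widehat{v}}\widehat{y}_{\widehat{v}}$ can be drawn in pairwise disjoint small neighbourhoods of the distinct lifts of $v$. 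Hence the augmented cover remains embedded in $\Sigma$, completing the induction.
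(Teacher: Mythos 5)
Your proof is correct and follows essentially the same route as the paper: treat each of the four operations separately, observe that deletion/contraction lifts straightforwardly, and for the Y-operation add for each lift $\widehat{v}$ of $v$ an edge between its unique neighbours over $x$ and $y$, using the fact that a degree-3 vertex has any two of its three incident edges consecutive in the rotation, so the new edge can be drawn in the corresponding face corner. The only cosmetic wrinkle is that you first phrase the "three corners" observation as if $v$ itself were embedded in $\Sigma$ (it need not be — only $\widehat{H}$ is embedded); this is harmless since you immediately transfer the argument to the lifts $\widehat{v}$, which is where it actually lives, but it would be cleaner to state it directly for $\widehat{v}$.
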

	
	\begin{proof}
		Suppose that $H$ has a $p$-ply cover $\widehat{H}$, via the map $\vphi: V(\widehat{H}) \to V(H)$, embedded in $\Sigma$. We show that each operation in the definition of a Y-minor preserves the property of having such a cover. This is clear for the graph minor operations using the fact that $\vphi$ is a bijection on neighbourhoods, and that the class of graphs embeddable in $\Sigma$ is minor-closed:
		\begin{itemize}
			\item If $G$ is obtained from $H$ by deleting any edge, then $\widehat{H}$ is a cover of $G$ via the same map $\vphi$.
			\item If $G$ is obtained from $H$ by deleting a vertex $v$, then $\widehat{H} \setminus \vphi^{-1}(v)$ is a cover of $G$ with a suitable restriction of $\vphi$.
			\item If $G$ is obtained from $H$ by contracting an edge $ab$ to obtain a new vertex $c$, then let $\widehat{G}$ be obtained from $\widehat{H}$ by contracting all edges of the form $\widehat{a}\widehat{b} \in E(\widehat{H})$ where $\widehat{a}\in \vphi^{-1}(a)$ and $\widehat{b} \in \vphi^{-1}(b)$. Then $\widehat{G}$ is a cover of $G$ with map $\vphi': V(\widehat{G}) \to V(G)$ define by $\vphi'(v) = c$ if $v$ is a vertex formed by contraction, and $\vphi'(v) = \vphi(v)$ otherwise.
		\end{itemize}
		
		For the last operation, let $G$ be a graph obtained from $H$ by adding one edge $ab$ between two neighbours of a degree 3 vertex $c$. To define a suitable cover, we take $\widehat{G}$ to be the graph obtained from $\widehat{H}$ by adding for each vertex $\widehat{v} \in \vphi^{-1}(v)$ an edge between the (unique) neighbours $\widehat{a}$ and $\widehat{b}$ of $\widehat{v}$ such that $\widehat{a} \in \vphi^{-1}(a)$ and $\widehat{b} \in \vphi^{-1}(b)$. For every such triple $\widehat{v}, \widehat{a}, \widehat{b} \in V(\widehat{H})$, since $\widehat{v}$ has degree 3 then $\widehat{a}$ and $\widehat{b}$ must appear consecutively in the cyclic ordering at $\widehat{v}$, there is a face of $\widehat{H}$ incident to both edges $\widehat{a}\widehat{v}$ and $\widehat{b}\widehat{v}$. Thus, we can draw $\widehat{a}\widehat{b}$ into this face, and it follows that $\widehat{G}$ is embeddable in $\Sigma$. In addition, viewing $\vphi$ as a map $V(\widehat{G}) \to V(G)$, it is clear by construction that neighbourhoods are mapped bijectively, so $\widehat{G}$ is a $p$-ply $\Sigma$-cover of $G$.
	\end{proof}
	
	\cref{prop:yminorcover} facilitates the strategy for \cref{thm:finply2} and \cref{thm:finply_countable}; we want to find a family such that every connected graph with high enough Euler genus contains a member of the family as a Y-minor, and such that we can prove these theorems for each member of the family.
	Kuratowski's \cite{kuratowski1930} famous theorem characterises planar graphs as precisely the graphs containing no subdivision of $K_5$ or $K_{3,3}$, known as the Kuratowski subgraphs, and Wagner \cite{wagner1937} proved an equivalent statement excluding the Kuratowski subgraphs as minors. Our goal in this section is to prove such a theorem for (finite and countable) connected graphs. 
	In the finite case, we shall derive this from a theorem of Robertson and Seymour \cite{robertson24b} (\cref{thm:RS}), 
	and in the countable case, we employ a recent theorem of Georgakopoulos \cite{georgakopoulos2023excluded} (\cref{thm:Georgakopoulos}).
	
	The excluded structures that we need to consider are built from sums of Kuratowski subgraphs as described below, with some examples depicted in \cref{fig:sum-kuratowski}. For $k\in \mathbb{N} \cup \{\infty\}$:
	\begin{itemize}
		\item let $\Lambda_{1,k}$ be the disjoint union of $k$ copies of $K_5$;
		\item let $\Lambda_{2,k}$ be the disjoint union of $k$ copies of $K_{3,3}$;
		\item let $\Omega_{1,k}$ be the graph obtained by taking $k$ copies of $K_5$ and identifying \textbf{one} vertex from each copy together;
		\item let $\Omega_{2,k}$ be the graph obtained by taking $k$ copies of $K_{3,3}$ and identifying \textbf{one} vertex from each copy together;
		\item let $\Theta_{1,k}$ be the graph obtained by taking $k$ copies of $K_5$ and identifying \textbf{two} vertices from each copy together;
		\item let $\Theta_{2,k}$ be the graph obtained by taking $k$ copies of $K_{3,3}$ and identifying \textbf{two adjacent} vertices from each copy together;
		\item let $\Theta_{3,k}$ be the graph obtained by taking $k$ copies of $K_{3,3}$ and identifying \textbf{two non-adjacent} vertices from each copy together;%
		\item let $\Pi_{1,k}$ be the graph obtained by taking $k$ copies of $K_5$, say $K_5^1, \ldots, K_5^k$, with chosen pairs of \textbf{distinct} vertices $v^i_a$ and $v^i_b$ in each $K^i_5$, and joining them along a path by identifying $v_a^i$ and $v_b^{i+1}$ for each $1\leq i < k$;
		\item let $\Pi_{2,k}$ be the graph obtained by taking $k$ copies of $K_{3,3}$, say $K_{3,3}^1, \ldots, K_{3,3}^k$, with chosen pairs of \textbf{adjacent} vertices $v^i_a$ and $v^i_b$ in each $K_{3,3}^i$, and joining them along a path by identifying $v_a^i$ and $v_b^{i+1}$ for each $1\leq i < k$;
		\item let $\Pi_{3,k}$ be the graph obtained by taking $k$ copies of $K_{3,3}$, say $K_{3,3}^1, \ldots, K_{3,3}^k$, with chosen pairs of \textbf{non-adjacent} vertices $v^i_a$ and $v^i_b$ in each $K_{3,3}^i$, and joining them along a path by identifying $v_a^i$ and $v_b^{i+1}$ for each $1\leq i < k$;
		\item let $K_{3,k}$ be the complete bipartite graph with 3 vertices in one part and $k$ in the other.
	\end{itemize}

	\renewcommand{\arraystretch}{1}
	\begin{figure}
		\centering
		\begin{tabular}{ccccc}
			\includegraphics[page=3,scale=0.12]{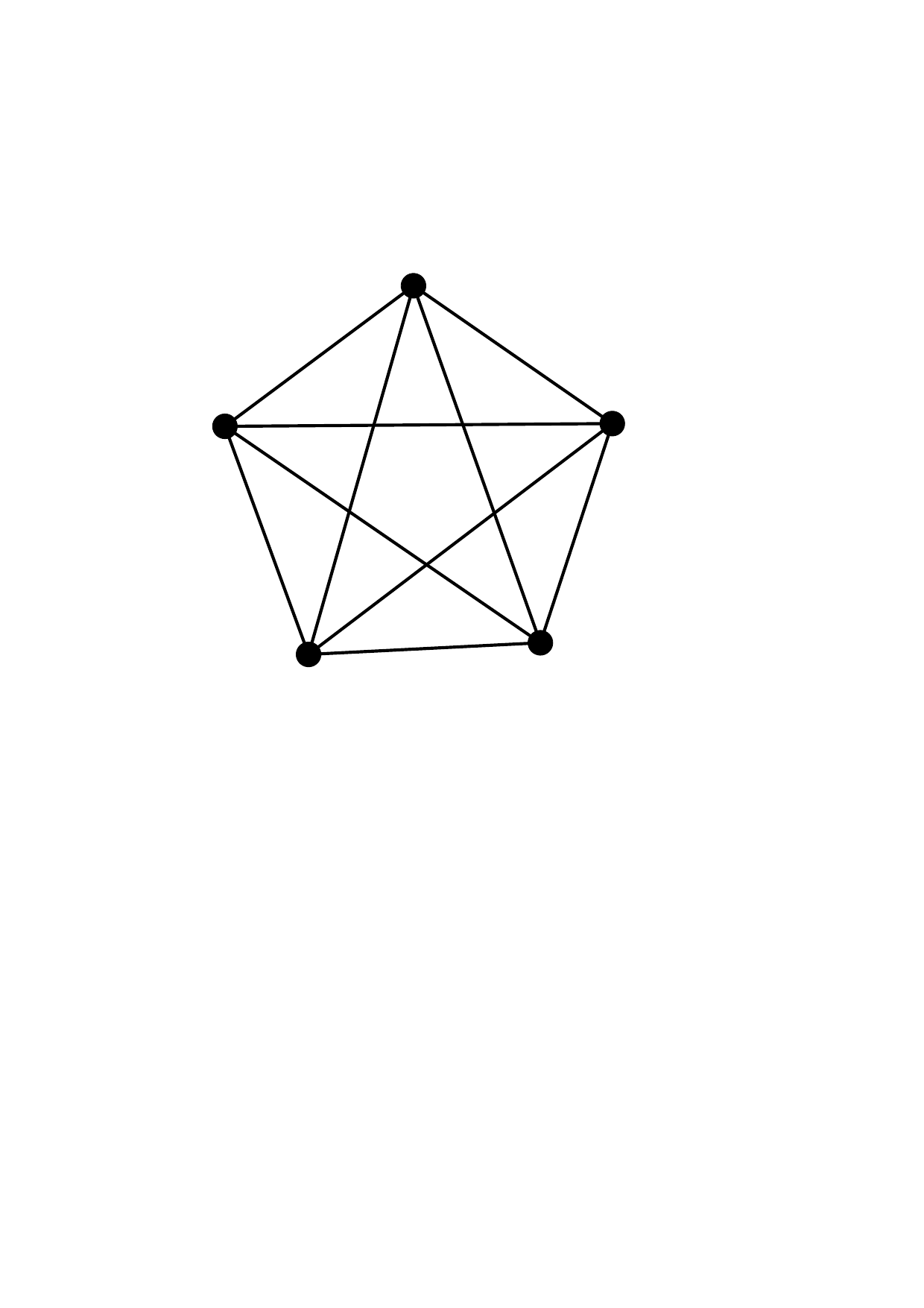} & \includegraphics[page=4,scale=0.12]{kuratowskis.pdf} & 
			\includegraphics[page=5,scale=0.11]{kuratowskis.pdf} &
			\includegraphics[page=10,scale=0.12]{kuratowskis.pdf} & \includegraphics[page=11,scale=0.12]{kuratowskis.pdf} \\
			$\Omega_{1,3}$ &  $\Omega_{2,3}$ & $\Theta_{1,3}$ & $\Theta_{2,3}$ & $\Theta_{3,3}$\\[4mm]
		\end{tabular}
		\setlength{\tabcolsep}{16pt}
		\begin{tabular}{cccc}
			\includegraphics[page=6,scale=0.1]{kuratowskis.pdf}& 
			\includegraphics[page=7,scale=0.11]{kuratowskis.pdf} & \includegraphics[page=8,scale=0.13]{kuratowskis.pdf} &
			\includegraphics[page=9,scale=0.13]{kuratowskis.pdf}\\
			$\Pi_{1,4}$ & $\Pi_{2,4}$ & $\Pi_{3,4}$ & $K_{3,5}$
		\end{tabular}
		\caption{Some examples of sum-Kuratowski graphs.}
		\label{fig:sum-kuratowski}
	\end{figure}
	
	We will refer to the $\Omega_{i,k}$, $\Theta_{i,k}$ and $\Pi_{i,k}$ graphs collectively the \defn{sum-Kuratowski} graphs, denoted by $\mathcal{K}_k=\{\Omega_{1,k}, \Omega_{2,k}, \Theta_{1,k}, \Theta_{2,k} , \Theta_{3,k} , \Pi_{1,k}, \Pi_{2,k} , \Pi_{3,k} , K_{3,k} \}$. Within a sum-Kuratowski graph, the vertices that are formed by identification are \defn{joining} vertices, and each (connected) component of the graph with joining vertices deleted is a \defn{hanging component}.
	
	For finite $k$, our starting point is the following theorem of Robertson and Seymour \cite{robertson24b}, which can be viewed as a classification of when the graph minor structure theorem \cite{robertson2003graph} can be simplified to being graphs embeddable on a surface. There are analogous theorems that examine when we can avoid vortices \cite{thilikos2024killing}, and when the graphs are embeddable on a surface with a bounded number of additional apex vertices \cite{robertson24a}.
	
	\begin{theorem}[Robertson and Seymour \cite{robertson24b}]\label{thm:RS}
		If a graph $G$ does not contain any of the graphs $\Lambda_{1,k},$ $\Lambda_{2,k},$ $\Omega_{1,k}, \Omega_{2,k}, \Theta_{1,k}, \Theta_{2,k} , \Theta_{3,k} , K_{3,k}$ as a minor, then $G$ is embeddable in a surface of Euler genus at most $f(k)$ where $f:\mathbb{N} \to \mathbb{N}$ is increasing and $\lim_{k \to \infty} f(k)=\infty$.
	\end{theorem}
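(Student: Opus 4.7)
The plan is to prove the contrapositive: if the Euler genus of $G$ exceeds a carefully chosen increasing function $f(k)$ with $f(k)\to\infty$, then $G$ must contain one of the listed graphs as a minor. The main tool is the Graph Minor Structure Theorem of Robertson and Seymour, which applied to any graph excluding a fixed minor yields a tree decomposition whose torsos are \emph{almost-embeddable} on a bounded-genus surface with boundedly many apex vertices and vortices of bounded depth. The strategy is to argue that each feature of such a decomposition---the number of torsos with non-planar content, the adhesion sizes joining them, the number of apices, and the widths of the vortices---if made too large, forces a specific minor from the list of excluded graphs.

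The case analysis splits into two main directions. First, if the tree decomposition has many torsos containing non-planar structure, each such torso hosts a $K_5$ or $K_{3,3}$ minor, and the pattern by which these are glued produces a sum-Kuratowski graph: essentially disjoint torsos combine to give $\Lambda_{1,k}$ or $\Lambda_{2,k}$; torsos meeting at a single shared adhesion vertex yield $\Omega_{1,k}$ or $\Omega_{2,k}$; and adhesions of size two yield one of the three $\Theta$-variants, distinguished by whether the two joining vertices are adjacent or non-adjacent in the underlying $K_5$ or $K_{3,3}$. Second, if a single torso has either a large apex set, a deep vortex, or very high underlying surface genus, a pigeonhole argument extracts a suitable minor: three apex vertices together with many connected witnesses in the planar part give one side of $K_{3,k}$, a vortex of high width decomposes along its linear order to yield $K_{3,k}$, and very high surface genus inside a single torso can be routed along many disjoint non-contractible cycles into multiple Kuratowski subgraphs, which then amalgamate into the $\Lambda$, $\Omega$, or $\Theta$ family depending on how they share apex/boundary vertices.

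The main obstacle is the quantitative bookkeeping needed to choose $f(k)$ so that a graph whose Euler genus exceeds $f(k)$ necessarily displays one of these structural features at a threshold large enough to extract the corresponding minor. Extracting $K_{3,k}$ from a wide vortex is especially delicate, as one must convert the linear decomposition describing the vortex into an explicit minor embedding; similarly, bounding the adhesion structure between high-genus torsos requires careful routing arguments through the tree decomposition to ensure the joining vertices of the Kuratowski pieces line up as required by the specific $\Omega$ or $\Theta$ graph to be produced. All of this is ultimately packaged inside the deep machinery of the Robertson--Seymour graph minor project, from which this result is obtained essentially as a specialisation to the bounded-genus setting.
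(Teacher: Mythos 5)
This statement is cited by the paper as an external result of Robertson and Seymour (arXiv:2405.05384, ``Excluding sums of Kuratowski graphs''); the paper does not prove it, and indeed emphasizes in a footnote that the constant in \cref{thm:finply2} is inexplicit precisely because this citation is used as a black box. So there is no ``paper's own proof'' to compare your attempt against: you have supplied a sketch where the paper supplies a reference.

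As a sketch of how one might hope Robertson and Seymour argue, the outline is directionally reasonable, but essentially all of the mathematical content is deferred, and some of the steps you state as if they were routine are in fact the entire difficulty. Two concrete concerns. First, applying the Graph Minor Structure Theorem requires a fixed excluded minor; you can get one (excluding $\Lambda_{1,k}$ forces exclusion of $K_{5k}$), but then every parameter in the almost-embeddable structure (genus, apex count, vortex depth, adhesion) grows with $k$, so the claim that ``few apices or shallow vortices'' can be assumed is not a free consequence of the structure theorem --- it is exactly what needs to be proved, by showing that a large apex set or a deep vortex forces one of the listed minors. Second, the claims that ``three apex vertices together with many connected witnesses in the planar part give one side of $K_{3,k}$'' and that ``a vortex of high width decomposes along its linear order to yield $K_{3,k}$'' are stated without justification and are not obviously true as phrased: apices need not be attached across a large region, and the known obstruction to eliminating vortices (see Thilikos--Wiederrecht, cited in the paper) is a more delicate structure than a bare $K_{3,k}$. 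Finally, even with no apices and no vortices, pasting bounded-genus torsos along a tree decomposition into a single bounded-genus embedding is nontrivial and needs the adhesion sets to be controlled, which is where the $\Omega$- and $\Theta$-type minors come in. In short, your proposal correctly identifies the tool (GMST) and the shape of the case analysis, but what remains is not ``quantitative bookkeeping'' --- it is the theorem itself, which the paper deliberately imports rather than reproves.
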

	
	We first modify this theorem to obtain a statement for connected graphs, in particular removing disjoint unions of Kuratowski subgraphs from consideration at the cost of introducing the path-joined structures $\Pi_{i,k}$. To do this, we need the following simple lemma.
	
	\begin{lemma}\label{lem2:tree}
		Let $T$ be a tree and let $X\subseteq V(T)$ with $|X|\ge k^2$. Either $T$ contains a path $P$ with $|X \cap V(P)| \ge k$, or $T$ contains a subtree $T'$ with leaf set $L$ and $|X\cap L| \ge k$.
	\end{lemma}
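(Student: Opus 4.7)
The plan is to root $T$ at an arbitrary vertex and consider the ancestor--descendant partial order on $X$, where $x \le y$ means that $x$ is an ancestor of $y$ (allowing $x = y$). Applying Mirsky's theorem to this finite poset, either it contains a chain of length $k$, or it can be partitioned into at most $k-1$ antichains, in which case pigeonhole yields an antichain $A \subseteq X$ with $|A| \ge |X|/(k-1) \ge k^2/(k-1) \ge k$.

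In the chain case, the chain $x_1 < x_2 < \cdots < x_k$ consists of vertices all lying on the root-to-$x_k$ path, hence on the path $P$ in $T$ from $x_1$ to $x_k$; this gives $|X \cap V(P)| \ge k$ and the first conclusion.

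For the antichain case, I would take $T'$ to be the minimal subtree of $T$ containing $A$ and show that its leaf set is exactly $A$. Minimality immediately forces every leaf of $T'$ to lie in $A$, since any leaf outside $A$ could be pruned. For the converse, suppose some $a \in A$ is an internal vertex of $T'$; then $T' \setminus \{a\}$ has at least two components, each of which must meet $A$ (otherwise it could be pruned without losing $A$), so we may pick $a_1, a_2 \in A \setminus \{a\}$ in distinct components. Then $a$ lies strictly on the $a_1$--$a_2$ path in $T$, and examining the least common ancestor of $a_1$ and $a_2$ in the rooted tree shows that $a$ must be comparable to at least one of $a_1, a_2$, contradicting the antichain property. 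Hence $L(T') = A$ and $|X \cap L(T')| = |A| \ge k$, giving the second conclusion.

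The only delicate point is the antichain-to-leaves identification, which rests on the LCA argument above; the remainder is routine bookkeeping around Mirsky's theorem, and the case $k = 1$ is trivial. I do not foresee any major obstacle.
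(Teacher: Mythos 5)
Your proof is correct, but it takes a genuinely different route from the paper's. The paper proves the lemma by a very elementary pruning argument: delete leaves of $T$ not in $X$ until every leaf lies in $X$; if there are at least $k$ leaves, take $T' = T$; otherwise the fewer than $k$ leaf-to-leaf paths from a fixed leaf $\ell_1$ cover all of $V(T)$, and pigeonhole puts at least $|X|/(m-1) \ge k$ members of $X$ on one such path. Your argument instead roots $T$, applies Mirsky's theorem to the ancestor--descendant poset on $X$, and resolves the two outcomes separately: a chain becomes a root-to-vertex path, and an antichain is shown to be precisely the leaf set of its minimal Steiner tree via the least-common-ancestor decomposition of tree paths. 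Both proofs are sound and both hinge on pigeonhole with the bound $|X| \ge k^2 \ge k(k-1)$. The paper's argument is shorter and uses no external theorem, while yours exposes a pleasant chain/antichain dichotomy at the cost of invoking Mirsky and of the nontrivial (but correctly argued) step that an internal vertex $a \in A$ of the Steiner tree would be comparable to some other element of $A$. You correctly flagged the $k=1$ degenerate case, where $k-1 = 0$ would spoil the antichain count but the conclusion holds trivially.
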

	
	\begin{proof}
		By repeatedly deleting leaves of $T$ that are not in $X$, we may assume that all leaves are in $X$. Let $\ell_1, \ldots , \ell_m$ be these leaves. 
		If $m \ge k$, then we may simply take $T'=T$ and $L=\{\ell_1, \ldots , \ell_m\}$. 
		Otherwise, suppose that $m <k$. 
		For each $2\le i \le m$, let $P_i$ be the path contained in $T$ between $\ell_1$ and $\ell_i$.
		Then $\bigcup_{i=2}^m V(P_i) = V(T)$. Therefore, there exists some $2\le i \le m$ with $|V(P_i) \cap X| \ge |X|/(m-1) \ge |X|/k\ge k$, as desired.
	\end{proof}
	
	\begin{lemma}\label{lem2:connect}
		Let $G$ be a connected graph containing either $\Lambda_{1,9k^2}$ or $\Lambda_{2,9k^2}$ as a minor.
		Then $G$ contains one of $\Omega_{1,k}, \Omega_{2,k}, \Pi_{1,k}, \Pi_{2,k}, \Pi_{3,k}$ as a minor.
	\end{lemma}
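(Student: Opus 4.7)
My plan is to distil the hypothesis into a tree-of-blocks picture and then apply Lemma~\ref{lem2:tree} to that tree. Fix a realisation of the $\Lambda_{i,9k^2}$ minor as $9k^2$ vertex-disjoint connected subgraphs $H_1,\dots,H_{9k^2}$ of $G$, each modelling a copy of $K_5$ (if $i=1$) or $K_{3,3}$ (if $i=2$); since $G$ is connected, a minimal connected subgraph $S\subseteq G$ containing all $H_j$ consists of the blocks together with a tree of internally disjoint connecting paths between them. Contracting each $H_j$ to a point yields a tree $T$ on $9k^2$ vertices in which each edge carries two ``attachment labels'' specifying which vertex of each of the two incident blocks the corresponding connecting path meets. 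Applying Lemma~\ref{lem2:tree} to $T$ with $X=V(T)$ and parameter $3k$ (legitimate since $|X|=(3k)^2$) yields one of two structural alternatives: a path $B_1,\dots,B_m$ in $T$ with $m\ge 3k$, or a subtree (namely $T$ itself) with at least $3k$ leaves.

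In the path case, I classify each internal block $B_j$ as \emph{branching} or \emph{pass-through} according as its two attachments $u_j^-,u_j^+\in V(B_j)$ on the edges $B_{j-1}B_j$ and $B_jB_{j+1}$ are distinct or equal. If enough internal blocks are branching, I select $k$ of them in path order and contract everything in between, producing a chain of $k$ copies of $K_5$/$K_{3,3}$ meeting at consecutive distinct vertices, i.e.\ the desired $\Pi_{i,k}$ minor (and a further pigeonholing on whether the two attachments in each $K_{3,3}$ block are adjacent or non-adjacent separates $\Pi_{2,k}$ from $\Pi_{3,k}$). Otherwise the few branching blocks cut the path into a small number of maximal runs of pass-through blocks; inside such a run the forced chain of equalities $u_{j-1}^+=u_j^-=u_j^+=u_{j+1}^-$ shows that a single vertex of $G$ belongs to every block of the run in the minor, and a pigeonhole on run lengths then extracts $\Omega_{i,k}$.

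In the subtree case, I root $T$ at any non-leaf block $c$ and partition the leaves of $T$ according to which vertex of $V(c)$ the first edge of the path from the leaf to $c$ is attached at. Since $|V(c)|\le 6$, pigeonhole yields a vertex $v\in V(c)$ receiving many leaves; contracting every intermediate block and every connecting path along the routes from these leaves to $c$ identifies $v$ with a vertex of each leaf block (using that fully contracting an intermediate block collapses all its attachment vertices to a single point), so together with $c$ one obtains $\Omega_{i,k}$ centred at $v$. The main technical obstacle is constant-chasing: ensuring in each case that at least $k$ blocks survive rather than $k/5$ or $k/6$ requires carefully balancing the pigeonholing against the at-most-$6$ vertices per block and, for $K_{3,3}$, against the two sub-types of $\Pi$-structures; this balance is precisely what motivates the hypothesis $|X|=9k^2=(3k)^2$ and forces the factor of $3$ in the application of Lemma~\ref{lem2:tree}.
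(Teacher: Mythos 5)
Your overall strategy matches the paper's: contract/organise the copies into a tree, apply Lemma~\ref{lem2:tree}, and then split into a ``subtree with many leaves'' case (giving $\Omega_{i,k}$) and a ``long path'' case (giving $\Omega_{i,k}$ or $\Pi_{i,k}$). The setup --- contracting each copy to a super-vertex and recording attachment points --- is fine, modulo minor bookkeeping (after contracting the $H_j$'s, the tree will in general have additional intermediate vertices, so one should take $X$ to be the set of super-vertices rather than $V(T)$). However, the two extra pigeonholes you introduce are where the proof goes wrong.

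In the subtree case, the pigeonhole over $V(c)$ is both unnecessary and fatal to the count. It is unnecessary because you can simply contract the \emph{entire} internal tree, including the root block $c$ and all non-leaf blocks, down to a single apex vertex; each of the $k$ leaf blocks touches this apex at its one attachment vertex, and that already is $\Omega_{i,k}$. It is fatal because with Lemma~\ref{lem2:tree} applied at parameter $3k$ you only obtain $3k$ leaves in $X$, and pigeonholing over the at-most-$6$ vertices of $c$ leaves you with only $\lceil k/2\rceil$ of them, not $k$; repairing this by enlarging the parameter would demand $|X|\ge (6k)^2$, exceeding the hypothesis.

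In the path case, the ``pigeonhole on run lengths'' does not give $\Omega_{i,k}$. With $3k$ blocks on the path and fewer than $k$ of them branching, you have more than $2k$ pass-through blocks, but the $<k$ branching blocks split the path into at most $k$ maximal runs, so the pigeonhole only guarantees a run of length $\ge 2$ --- nowhere near $k$. The key point you are missing is that the pass-through blocks need not be consecutive: you may pick any $k$ pass-through blocks, delete the non-path portions of the intervening (branching) blocks, and contract the whole remaining path; this identifies the $k$ single attachment vertices into one apex and yields $\Omega_{i,k}$. This is exactly what the paper does by pigeonholing directly on the three intersection types (one vertex; two adjacent vertices; two non-adjacent vertices, i.e.\ a $3$-vertex sub-path in $K_{3,3}$): $3k$ blocks split into $3$ types gives $\ge k$ of some single type, which immediately produces $\Omega_{i,k}$, $\Pi_{2,k}$, or $\Pi_{3,k}$ (and for $K_5$ only two types arise, giving $\Omega_{1,k}$ or $\Pi_{1,k}$). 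Your two-stage split (branching/pass-through, then adjacent/non-adjacent) can be made to work, but the thresholds must be chosen as a single $3$-way pigeonhole; as stated, the count does not close.
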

	
	\begin{proof}
		By contracting edges of $G$, we may assume that $G$ contains either $\Lambda_{1,9k^2}$ or $\Lambda_{2,9k^2}$ as a subgraph.
		Let $R_1, \ldots , R_{9k^2}$ be vertex disjoint subgraphs of $G$, all of which are either $K_5$ or $K_{3,3}$.
		Now, contract the edges of each $R_i$ to vertices $r_i$ and call the resulting graph $G'$.
		Let $T$ be a spanning tree of $G'$ and let $X=\{r_1,\ldots ,r_{9k^2}\}$.
		
		Suppose that there exists a subtree $T'$ of $T$ with leaves $L$ and $|X\cap L|=k$.
		Then $G$ contains a subgraph $H$ which can be obtained from a tree with $k$ leaves by identifying each leaf with a vertex of a different $K_5$ or $K_{3,3}$ (depending on whether all $R_i$ are $K_5$ or $K_{3,3}$).
		In the first case we obtain $\Omega_{1,k}$ as minor by contracting this tree, and otherwise we obtain $\Omega_{2,k}$.
		So we may assume that there is no subtree $T'$ of $T$ with leaves $L$ and $|X\cap L|=k$.
		
		By \cref{lem2:tree}, $T$ contains a path $P$ with $|V(P)\cap X|=3k^2$.
		Suppose first that each $R_i$ is a $K_5$.
		Then in $G$ there is a path $P'$ that either intersects $k$ copies of $K_5$, each in a single vertex, or each in two vertices being consecutive vertices of the path.
		In the first case, $G$ contains $\Omega_{1,k}$ as a minor by contracting the path $P'$.
		In the second case, $G$ contains $\Pi_{1,k}$ as a minor by contracting the edges of the path $P'$ not contained in one of the $k$ copies of $K_5$. 
		Thus, we may assume that each $R_i$ is a $K_{3,3}$.
		Then in $G$ there is a path $P'$ that either intersects $k$ copies of $K_{3,3}$, each in a single vertex, or each in two vertices being consecutive vertices of the path and adjacent in the $K_{3,3}$, or lastly each in three vertices being consecutive vertices of the path and being a 3-vertex path in of $K_{3,3}$.
		In the first case, $G$ contains $\Omega_{2,k}$ as a minor by contracting the path $P'$.
		For the remaining cases, we can contract the edges of the path $P'$ not contained in one of the $k$ copies of $K_{3,3}$ to see that $G$ contains $\Pi_{2,k}$ or $\Pi_{3,k}$ respectively as a minor.
		
		We conclude that $G$ contains one of $\Omega_{1,k}, \Omega_{2,k}, \Pi_{1,k}, \Pi_{2,k}, \Pi_{3,k}$ as a minor, as desired.
	\end{proof}
	
	Using \cref{lem2:connect}, the following interim theorem follows directly from \cref{thm:RS}.
	
	\begin{theorem}\label{thm:connectedRS}
		Every connected graph $G$ containing no $H$-minor for any $H\in \mathcal{K}_k$ is embeddable in a surface of Euler genus at most $f(k)$.
	\end{theorem}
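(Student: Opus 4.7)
The plan is to reduce directly to Theorem~\ref{thm:RS} at the inflated parameter $k' = 9k^2$, using Lemma~\ref{lem2:connect} to absorb the disjoint-union cases. So I would let $G$ be a connected graph containing no $H$-minor for any $H \in \mathcal{K}_k$, and aim to verify the hypothesis of Theorem~\ref{thm:RS} at parameter $9k^2$.

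Two ingredients go into this verification. First, since by assumption $G$ contains none of $\Omega_{1,k}, \Omega_{2,k}, \Pi_{1,k}, \Pi_{2,k}, \Pi_{3,k}$ as a minor and $G$ is connected, the contrapositive of Lemma~\ref{lem2:connect} immediately gives that $G$ has no $\Lambda_{1,9k^2}$ or $\Lambda_{2,9k^2}$ minor. Second, for the remaining forbidden minors in Theorem~\ref{thm:RS}, I would invoke monotonicity of the minor relation: for $k' \ge k$, the graphs $\Omega_{i,k}$, $\Theta_{i,k}$, and $K_{3,k}$ are each minors of $\Omega_{i,k'}$, $\Theta_{i,k'}$, and $K_{3,k'}$ respectively (obtained by deleting all but $k$ of the $k'$ hanging copies). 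By transitivity of the minor relation, the absence of these smaller versions as minors of $G$ rules out the larger versions, so at $k' = 9k^2 \ge k$ the graph $G$ contains no $\Omega_{i,k'}, \Theta_{i,k'}, K_{3,k'}$ minor either.

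Putting these together, $G$ satisfies the hypothesis of Theorem~\ref{thm:RS} at parameter $9k^2$, and is therefore embeddable in a surface of Euler genus at most $f(9k^2)$. After replacing the function $f$ of Theorem~\ref{thm:RS} by $k \mapsto f(9k^2)$ (still increasing and tending to infinity as $k \to \infty$), this is the statement of the theorem. There is no genuine obstacle here: the real content is in Lemma~\ref{lem2:connect}, and what remains is bookkeeping on the minor parameters.
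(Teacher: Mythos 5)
Your proof is correct and matches the paper's intended argument, which simply asserts that Theorem~\ref{thm:connectedRS} follows directly from Theorem~\ref{thm:RS} via Lemma~\ref{lem2:connect}; you have supplied the omitted bookkeeping, namely applying Lemma~\ref{lem2:connect} in the contrapositive to exclude $\Lambda_{1,9k^2}$ and $\Lambda_{2,9k^2}$, using minor monotonicity for the remaining families, and invoking Theorem~\ref{thm:RS} at the inflated parameter $9k^2$. You also correctly observe that the $f$ appearing in the conclusion should really be read as $k \mapsto f(9k^2)$, a reparametrisation the paper leaves implicit but which preserves the required monotonicity and divergence.
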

	
	There is a parallel sequence of results in the countable case, starting with the following characterisation of graphs embeddable in a surface (with unrestricted genus).
	\begin{theorem}[Georgakopoulos \cite{georgakopoulos2023excluded}]\label{thm:Georgakopoulos}
		A countable graph $G$ embeds into a compact (orientable) surface if and only if it does not have any of $\Lambda_{1,\infty}$, $\Lambda_{2,\infty}$, $\Omega_{1,\infty}$, $\Omega_{2,\infty}$, $\Theta_{1,\infty}$, $\Theta_{2,\infty}$, $\Theta_{3,\infty}$, or $K_{3,\infty}$ as a minor.
	\end{theorem}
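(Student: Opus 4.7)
The forward direction is short: since any minor of a graph embedded in a compact surface $\Sigma$ also embeds in $\Sigma$, it suffices to show that each of $\Lambda_{i,\infty}, \Omega_{i,\infty}, \Theta_{i,\infty}, K_{3,\infty}$ fails to embed in a compact surface. The plan is to observe that in each finite truncation the Euler genus grows with the number of Kuratowski summands (via block-additivity of Euler genus for the $\Lambda$, $\Omega$, $\Theta$ families, and via the known lower bound for the Euler genus of $K_{3,k}$); since the infinite graphs contain these finite truncations as subgraphs for every $k$, they cannot embed in a surface of any fixed Euler genus.

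For the converse, the plan is to reduce to the finite structural theorem \cref{thm:RS} via a two-stage compactness argument. Assume $G$ is a countable graph avoiding all of the listed infinite minors. Stage one shows that the finite subgraphs of $G$ have uniformly bounded Euler genus. Otherwise a sequence of finite subgraphs $G_n\subseteq G$ has Euler genera tending to infinity, so by \cref{thm:RS} each $G_n$ contains a minor $H_n \in \{\Lambda_{1,k_n}, \Lambda_{2,k_n}, \Omega_{1,k_n}, \Omega_{2,k_n}, \Theta_{1,k_n}, \Theta_{2,k_n}, \Theta_{3,k_n}, K_{3,k_n}\}$ with $k_n\to\infty$, and by pigeonhole one type recurs along a subsequence. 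The $\Lambda$ cases are then handled greedily: given chosen pairwise disjoint $K_5$-minors $S_1,\dots,S_i$ of $G$ using in total $s$ vertices, one picks $G_n$ with $k_n>s$ and selects from its $k_n$ pairwise disjoint $K_5$-minors one that avoids $S_1\cup\dots\cup S_i$, producing $\Lambda_{1,\infty}$ as a minor of $G$ (and analogously $\Lambda_{2,\infty}$). For the rooted types ($\Omega$, $\Theta$, $K_{3,k}$) I would additionally thin the subsequence to stabilise the identities of the (finitely many) joining branch sets of $H_n$ in $G$ by a diagonal/pigeonhole argument on a fixed enumeration of $V(G)$, and then iterate the greedy disjointness selection on the hanging components to build the corresponding infinite Kuratowski minor.

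Stage two lifts ``every finite subgraph embeds into a fixed compact surface $\Sigma$'' to an embedding of $G$ into $\Sigma$. Here I would apply a K\"onig's-lemma style argument: take an exhaustion $G_1\subseteq G_2\subseteq\cdots$ of $G$, encode the embeddings of $G_n$ in $\Sigma$ combinatorially (rotation system together with edge signatures), and apply the infinity lemma to the finitely branching tree of compatible extensions to extract a consistent embedding of $G$ in $\Sigma$. For the orientable version, one additionally invokes the fact cited from \cite{georgakopoulos2023excluded} that every countable graph embeddable in a compact surface also embeds in a compact orientable one. The main obstacle is stage one in the rooted case: pinning down the joining vertices inside $G$ and then selecting infinitely many hanging components with pairwise disjoint branch sets, while ensuring the limit is not spuriously a path-joined $\Pi_{i,\infty}$-structure, is delicate; however, this latter worry turns out to be illusory, since any $\Pi_{i,\infty}$ contains $\Lambda_{i,\infty}$ as a minor (take every other copy along the chain), so a path-joined limit still forces one of the listed forbidden minors.
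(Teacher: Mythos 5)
This statement is Georgakopoulos's theorem, imported verbatim as a black box from \cite{georgakopoulos2023excluded}; the paper offers no proof of it, so there is no internal argument to compare yours against. You are, in effect, proposing an independent derivation of Georgakopoulos's result from the Robertson--Seymour theorem (\cref{thm:RS}). That is an ambitious plan, and your high-level architecture (forward direction by unbounded Euler genus; converse via a two-stage reduction) is not unreasonable, but both stages of the converse have genuine gaps.

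The more serious gap is Stage two. You assert that if every finite subgraph of $G$ embeds in a fixed compact surface $\Sigma$, then a K\"onig's-lemma argument on rotation systems and signatures produces an embedding of $G$ into $\Sigma$. This is where essentially all of the real difficulty of Georgakopoulos's theorem lives, and the infinity-lemma sketch does not close it. A consistent limit of rotation systems for an exhaustion $G_1\subset G_2\subset\cdots$, each of whose restrictions has Euler genus at most $g$, determines a combinatorial embedding of $G$, but the carrier surface of that embedding is a priori only a (possibly non-compact) surface $\Sigma'$, and its end structure need not assemble into $\Sigma$ or even into any compact surface: monotonicity of genus along the exhaustion controls the handles but not how the ends accumulate, and the end compactification of $\Sigma'$ need not be a $2$-manifold. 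The classical Dirac--Schuster compactness for the sphere already requires a careful extension argument, and the higher-genus analogue is strictly harder; you would at minimum need to argue that the limit embedding has only finitely many accumulation faces and that each is a disc, which your sketch does not address. This is not a cosmetic omission: it is the reason the theorem merits its own paper.

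Stage one also needs more than you provide for the rooted families. For $\Lambda_{1,\infty}$ and $\Lambda_{2,\infty}$ the greedy disjointness argument is fine, and your observation that $\Pi_{i,\infty}$ subsumes $\Lambda_{i,\infty}$ (take every other copy) correctly forestalls the worry about spurious path-joined limits. But for $\Omega_{i,k_n}$, $\Theta_{i,k_n}$, and $K_{3,k_n}$ the branch sets of the joining vertices are arbitrary finite connected subsets of $V(G)$ that may grow and drift with $n$; pigeonholing on a fixed enumeration of $V(G)$ does not force them to stabilise, and there is no obvious way to extract a single common set of joining branch sets from infinitely many pairwise unrelated $\Omega_{1,k_n}$ minors. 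One would need a genuine structural argument (for instance, showing that failure to stabilise the roots itself produces one of the unrooted infinite minors), and that is exactly the kind of analysis the proposal defers to ``a diagonal/pigeonhole argument'' without content. As written, the proposal re-labels the two hard lemmas of the target theorem as routine and should not be regarded as a proof.
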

	
	We again derive a modified version from this in the same manner as for the finite case.
	\begin{lemma}\label{lem2:treeinf}
		Let $T$ be a tree and let $X\subseteq V(T)$ be infinite.
		Then, either $T$ contains a one-way infinite path $P$ with $|X \cap V(P)|=\infty$, or $T$ contains a subtree $T'$ with leaves $L$ and $|X\cap L| = \infty$.
	\end{lemma}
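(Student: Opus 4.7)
The plan is to adapt the argument for \cref{lem2:tree} to the infinite setting. Since iteratively pruning non-$X$ leaves need not terminate for infinite trees, I would instead split into cases based on whether $X$ spreads across infinitely many components at some vertex, or remains concentrated along a single ray.

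In the easier case, suppose there exists a vertex $v \in V(T)$ such that $T - v$ has infinitely many components each meeting $X$. For each such component $C$, pick $x_C \in X \cap V(C)$ and let $P_C$ be the path from $v$ to $x_C$ in $T$. The $P_C$'s pairwise share only the vertex $v$, so $T' := \{v\} \cup \bigcup_C P_C$ is a subtree whose leaves contain all of the $x_C$, giving infinitely many leaves in $X$.

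Otherwise every vertex of $T$ has only finitely many components of its deletion meeting $X$. Root $T$ at some vertex $r$, write $T_v$ for the subtree rooted at $v$, and inductively construct a ray $r = v_0, v_1, v_2, \ldots$ by taking $v_{i+1}$ to be a child of $v_i$ with $X \cap V(T_{v_{i+1}})$ infinite: such a child exists because only finitely many children $w$ of $v_i$ satisfy $X \cap V(T_w) \neq \emptyset$, yet together their subtrees cover the infinite set $X \cap V(T_{v_i}) \setminus \{v_i\}$. If infinitely many $v_i$ lie in $X$, the ray is already the desired path. Otherwise, setting $A_i := X \cap V(T_{v_i})$, the $A_i$'s form a decreasing chain of infinite sets, while $\bigcap_i V(T_{v_i}) = \emptyset$, since each $u \in V(T)$ lies in $V(T_{v_i})$ only for those finitely many $i$ with $v_i$ on the (finite) path from $r$ to $u$. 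Because only finitely many $v_i$ belong to $X$, this forces $A_i \setminus (A_{i+1} \cup \{v_i\})$ to be nonempty for infinitely many $i$: picking $x_i$ in this set and letting $P_i$ be the path from $v_i$ to $x_i$ avoiding $v_{i+1}$, the paths $P_i$ sit in pairwise disjoint side-subtrees of the ray and each meets the ray only at $v_i$, so their union with the ray is a subtree of $T$ with infinitely many leaves $x_i$ in $X$.

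The most delicate point I expect is this last step: inferring from $\bigcap_i V(T_{v_i}) = \emptyset$ that the side-differences $A_i \setminus (A_{i+1} \cup \{v_i\})$ are nonempty for infinitely many $i$ (essentially by ruling out that $A_i$ can stabilise to a constant infinite set), and then verifying that the resulting paths $P_i \subseteq V(T_{v_i}) \setminus V(T_{v_{i+1}})$ are pairwise disjoint so that gluing them onto the ray yields a genuine subtree with the claimed leaves.
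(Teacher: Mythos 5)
Your proof is correct, but it takes a genuinely different route from the paper's. The paper's argument is a single pruning step: fix a root $r$ and pass to the minimal subtree $T'$ of $T$ spanning $\{r\}\cup X$, so that every non-root leaf of $T'$ lies in $X$; one then concludes by observing that $T'$ must either contain a ray or have infinitely many leaves, for otherwise $T'$ (and hence $X\subseteq V(T')$) would be finite. You instead split on whether some vertex $v$ separates $X$ into infinitely many components, handling that case directly, and in the complementary locally-$X$-finite case build a descending ray $v_0,v_1,\ldots$ with each $A_i := X\cap V(T_{v_i})$ infinite. Your treatment of the delicate final step checks out: if $A_i\setminus (A_{i+1}\cup\{v_i\})$ were empty for all $i\ge N$, then $A_N\subseteq A_m\cup\{v_N,\ldots,v_{m-1}\}$ for every $m>N$, so $A_N\setminus\{v_N,v_{N+1},\ldots\}\subseteq \bigcap_m V(T_{v_m})=\emptyset$, forcing $A_N\subseteq\{v_i:i\ge N\}$ and hence infinitely many $v_i\in X$, a contradiction; and since each $P_i$ lies in $V(T_{v_i})\setminus V(T_{v_{i+1}})$ and meets the ray only at $v_i$, the $P_i$ are pairwise disjoint and their union with the ray is a subtree with the $x_i$ among its leaves. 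The paper's pruning is more economical, disposing of both alternatives in one stroke; your case analysis is longer but more explicitly constructive, with each step readily verifiable.
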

	
	\begin{proof}
		Fix any root vertex $r$ of $T$, and delete each vertex $v$ of $T$ that does not lie on a path between $r$ and an element of $X$. 
		The resulting graph is a subtree $T'$ containing $r$ in which every one-way infinite path starting at $r$ contains infinitely many vertices of $X$, and every leaf of $T'$ (except possibly $r$) is contained in $X$. 
		Thus, we are done if $T$ has a one-way infinite path or if $|L| =\infty$. If neither of these is true, then $T$ must be a finite tree, but this contradicts the fact that $|X|=\infty$. 
	\end{proof}
	
	\begin{lemma}\label{lem2:connectinf}
		Let $G$ be a connected countable graph containing either $\Lambda_{1,\infty}$ or $\Lambda_{1,\infty}$ as a minor.
		Then $G$ contains one of $\Omega_{1,\infty}$, $\Omega_{2,\infty}$, $\Pi_{1,\infty}$, $\Pi_{2,\infty}$, or $\Pi_{3,\infty}$ as a minor.
	\end{lemma}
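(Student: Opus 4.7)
The plan is to mirror the proof of \cref{lem2:connect}, substituting \cref{lem2:treeinf} for \cref{lem2:tree} and invoking the infinite pigeonhole principle where the finite proof uses a fractional counting argument. First, I would use the same contraction reduction to assume that $G$ contains $\Lambda_{1,\infty}$ or $\Lambda_{2,\infty}$ as a subgraph, fixing an infinite family of pairwise disjoint subgraphs $R_1, R_2, \ldots$ of $G$, all isomorphic to the same $K \in \{K_5, K_{3,3}\}$. Contracting each $R_i$ to a single vertex $r_i$ produces a connected graph $G'$; I would take a spanning tree $T$ of $G'$ and set $X = \{r_1, r_2, \ldots\}$.

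Next, I would apply \cref{lem2:treeinf} to $T$ and $X$ and treat the two cases separately. In the subtree case, $T$ contains a subtree $T'$ with infinitely many leaves in $X$; pulling $T'$ back to $G$ and contracting it to a single vertex identifies one vertex from each of infinitely many $R_i$'s at a common point, yielding $\Omega_{1,\infty}$ (if $K = K_5$) or $\Omega_{2,\infty}$ (if $K = K_{3,3}$) as a minor.

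In the infinite path case, the pullback of the one-way infinite path in $T$ to $G$ is a one-way infinite walk that passes through infinitely many $R_i$'s in order. After routing each visit through a shortest sub-walk inside $R_i$, the visit uses either one vertex, two adjacent vertices, or (only when $K = K_{3,3}$) three consecutive vertices with non-adjacent endpoints. By the infinite pigeonhole principle applied to this finite list of visit types, I can pass to an infinite subfamily of $R_i$'s all visited in the same way. Contracting the edges of the path lying outside the chosen $R_i$'s then exhibits $\Omega_{1,\infty}$ or $\Pi_{1,\infty}$ when $K = K_5$, and $\Omega_{2,\infty}$, $\Pi_{2,\infty}$, or $\Pi_{3,\infty}$ when $K = K_{3,3}$, as a minor of $G$.

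The main obstacle is the infinite-path case, specifically ensuring that the visit type is uniform along infinitely many copies so that the identification pattern $v_a^i = v_b^{i+1}$ required by $\Pi_{i,\infty}$ is actually realised. This is where the finite pigeonhole used in \cref{lem2:connect} is replaced by its infinite analogue, and where one also needs to verify that the intermediate sub-paths of the chosen path between selected copies contract cleanly into a single identified vertex. No essentially new ideas beyond those in \cref{lem2:connect} are required once \cref{lem2:treeinf} is in hand.
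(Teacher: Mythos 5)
Your proposal is correct and mirrors the paper's own proof: after the same contraction and spanning-tree setup, the paper also splits on the two outcomes of \cref{lem2:treeinf}, and in the path case classifies visits to each $R_i$ by the same three types and passes to an infinite uniformly-visited subfamily before contracting the intermediate path segments. You make the infinite pigeonhole step slightly more explicit than the paper does, but the argument is the same.
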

	
	\begin{proof}
		By contracting edges of $G$, we may assume that $G$ contains either $\Lambda_{1,\infty}$ or $\Lambda_{2,\infty}$ as a subgraph.
		Let $(R_i:)_{i\in \mathbb{N}}$ be vertex disjoint subgraphs of $G$, all of which are either $K_5$ or $K_{3,3}$.
		Now, contract the edges of each $R_i$ to vertices $r_i$, to obtain a new graph $G'$.
		Let $T$ be a spanning tree of $G'$ and let $X=\{r_i : i\in \mathbb{N}\}$.
		
		Suppose that there exists a subtree $T'$ of $T$ with leaves $L$ and $|X\cap L|=\infty$.
		Then $G$ contains as a subgraph a graph $H$ obtained from a tree with an infinte number of leaves by identifying each leaf with a vertex of different $K_5$ or $K_{3,3}$ depending on whether all $R_i$ are $K_5$ or $K_{3,3}$.
		In the first case we obtain $\Omega_{1,\infty}$ as minor by contracting this tree, and otherwise we obtain $\Omega_{2,\infty}$.
		So we may assume that there is no subtree $T'$ of $T$ with leaves $L$ and $|X\cap L|=\infty$.
		
		By \cref{lem2:treeinf}, $T$ contains a one-way infinite path $P$ with $|V(P)\cap X|=\infty$.
		Suppose first that each $R_i$ is a $K_5$.
		Then, in $G$ there is a one-way infinite path $P'$ that intersects an infinite number of copies of $K_5$, either each in a single vertex, or each in two vertices (so consecutive vertices of the path).
		In the first case, $G$ contains $\Omega_{1,\infty}$ as a minor by contracting the path $P'$.
		In the second case, $G$ contains $\Pi_{1,\infty}$ as a minor by contracting the edges of the path $P'$ not contained in one of the infinite number of copies of $K_5$.
		Thus, we may assume that each $R_i$ is a $K_{3,3}$.
		Then there is a path $P'$ in $G$ that intersects an infinite number of copies of $K_{3,3}$, either each in a single vertex, or each in two vertices being consecutive vertices of the path and adjacent in the $K_{3,3}$, or lastly each in three vertices being consecutive vertices of the path and being a 3-vertex path in of $K_{3,3}$.
		In the first case, $G$ contains $\Omega_{2,\infty}$ as a minor by contracting the path $P'$.
		For the remaining two cases, by contracting the edges of the path $P'$ not contained in one of the infinite number of copies of $K_{3,3}$, we see that $G$ has a $\Pi_{2,\infty}$ or $\Pi_{3,\infty}$ minor respectively.
		Thus, $G$ contains one of $\Omega_{1,\infty}. \Omega_{2,\infty}, \Pi_{1,\infty}, \Pi_{2,\infty}, \Pi_{3,\infty}$ as a minor, as desired.
	\end{proof}
	
	By \cref{thm:Georgakopoulos} and \cref{lem2:connectinf}, we obtain a connected version of \cref{thm:Georgakopoulos}. 
	
	\begin{theorem}\label{thm:connGeorgakopoulos}
		A countable connected graph $G$ embeds into a compact (orientable) surface if and only if it does not have any of $\Omega_{1,\infty}$, $\Omega_{2,\infty}$, $\Theta_{1,\infty}$, $\Theta_{2,\infty}$, $\Theta_{3,\infty}$, $\Pi_{1,\infty}$, $\Pi_{2,\infty}$, $\Pi_{3,\infty}$, or $K_{3,\infty}$ as a minor.
	\end{theorem}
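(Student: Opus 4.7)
The plan is to deduce Theorem \ref{thm:connGeorgakopoulos} by combining Theorem \ref{thm:Georgakopoulos} (the unrestricted version) with Lemma \ref{lem2:connectinf} in a manner exactly parallel to how Theorem \ref{thm:connectedRS} was deduced from Theorem \ref{thm:RS}. Essentially both directions reduce to checking that, once $G$ is connected, disallowing the $\Lambda_{i,\infty}$ minors in Theorem \ref{thm:Georgakopoulos} can be equivalently replaced by disallowing the $\Omega_{i,\infty}$ and $\Pi_{i,\infty}$ minors.

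For the ``only if'' direction, suppose $G$ embeds into a compact (orientable) surface. Theorem \ref{thm:Georgakopoulos} immediately rules out $\Omega_{1,\infty}$, $\Omega_{2,\infty}$, $\Theta_{1,\infty}$, $\Theta_{2,\infty}$, $\Theta_{3,\infty}$ and $K_{3,\infty}$ as minors of $G$. To handle the three $\Pi_{i,\infty}$ graphs, I would simply observe that selecting every second copy $K_5^1, K_5^3, K_5^5, \ldots$ (respectively $K_{3,3}^1, K_{3,3}^3, \ldots$) in $\Pi_{i,\infty}$ yields a vertex-disjoint family, so $\Pi_{1,\infty}$ contains $\Lambda_{1,\infty}$ as a subgraph and each of $\Pi_{2,\infty},\Pi_{3,\infty}$ contains $\Lambda_{2,\infty}$ as a subgraph. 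Hence a $\Pi_{i,\infty}$ minor in $G$ would in turn produce a $\Lambda_{i,\infty}$ minor, contradicting Theorem \ref{thm:Georgakopoulos}.

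For the ``if'' direction, suppose $G$ is a connected countable graph that contains no member of the list in Theorem \ref{thm:connGeorgakopoulos} as a minor. By Theorem \ref{thm:Georgakopoulos} it suffices to show that $G$ also avoids $\Lambda_{1,\infty}$ and $\Lambda_{2,\infty}$ as minors. Suppose for contradiction that $G$ has one of them as a minor; since $G$ is connected, Lemma \ref{lem2:connectinf} then forces $G$ to contain one of $\Omega_{1,\infty}, \Omega_{2,\infty}, \Pi_{1,\infty}, \Pi_{2,\infty}, \Pi_{3,\infty}$ as a minor, directly contradicting the hypothesis. Hence $G$ avoids the entire forbidden list of Theorem \ref{thm:Georgakopoulos} and therefore embeds into a compact (orientable) surface.

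There is no real obstacle here: all the substantive work was done in proving Theorem \ref{thm:Georgakopoulos} and Lemma \ref{lem2:connectinf}. The only minor care point worth writing out carefully in the final proof is the subgraph containment $\Lambda_{i,\infty} \subseteq \Pi_{i,\infty}$ used in the forward direction, since this is the one step that is not literally a citation.
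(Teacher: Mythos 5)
Your proposal is correct and matches the paper's approach exactly: Theorem~\ref{thm:connGeorgakopoulos} is derived by combining Theorem~\ref{thm:Georgakopoulos} with Lemma~\ref{lem2:connectinf}, and the observation that each $\Pi_{i,\infty}$ contains the corresponding $\Lambda_{j,\infty}$ as a subgraph (via every second copy) correctly fills in the one detail the paper leaves implicit in its one-line proof.
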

	
	The graph minor relation is strictly contained in the Y-minor relation, so the theorems above are still true if we simply replace `minor' with `Y-minor'. However, when we allow the additional operations of the latter, we can also significantly reduce the family of excluded Y-minors to just the sum-Kuratowski graphs built from $K_5$'s.
	\begin{lemma}\label{lem:Yminorreductions}
		For every $k\in \mathbb{N}\cup \infty$, we have the following:
		\begin{itemize}
			\item $\Omega_{2,k}$ contains $\Omega_{1,k}$ as a Y-minor,
			\item $\Theta_{2,k}$ contains $\Theta_{1,k}$ as a Y-minor,
			\item $\Theta_{3,k}$ contains $\Theta_{1,k}$ as a Y-minor,
			\item $\Pi_{2,k}$ contains $\Pi_{1,k}$ as a Y-minor, and
			\item $\Pi_{3,k}$ contains $\Pi_{1,k}$ as a Y-minor.
		\end{itemize}
	\end{lemma}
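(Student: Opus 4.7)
My plan is to establish each of the five Y-minor containments via one local construction applied to each copy of $K_{3,3}$ in the sum-Kuratowski graph. The core claim is that $K_{3,3}$ contains $K_5$ as a Y-minor while preserving any designated set of at most two vertices. Label the parts of one copy of $K_{3,3}$ as $A = \{a_1, a_2, a_3\}$ and $B = \{b_1, b_2, b_3\}$, and take the target $K_5$ to be supported on $A$ together with two vertices $b, b' \in B$, with the third vertex $b'' \in B$ deleted at the end. By swapping the labels of $A$ and $B$ if necessary, any designated set of one or two vertices can be arranged to lie in this target.

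Relative to $K_{3,3}$, the target $K_5$ is missing exactly four edges: the three triangle edges on $A$ and the single edge $bb'$. I would add these using two distinct degree-$3$ witnesses. First, any $a \in A$ has degree $3$ with neighbors $B \supseteq \{b, b'\}$, so it can serve as the witness to add the edge $bb'$. Second, the spare vertex $b''$ has degree $3$ with neighbors $A$, so it can serve as the witness to add each of the three triangle edges on $A$. Since an edge addition between two neighbors of a witness does not change the witness's own degree, both witnesses remain degree $3$ throughout their respective additions. After the four additions, delete $b''$. The resulting five-vertex graph has all ten edges of $K_5$: the three new triangle edges on $A$, the new edge $bb'$, and the six original $K_{3,3}$ edges between $A$ and $\{b, b'\}$.

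To finish the lemma, I would apply this construction independently in each $K_{3,3}$-copy of the sum-Kuratowski graph, with the labeling chosen so that the shared vertices lie in the target $K_5$: for $\Omega_{2,k}$ the single shared vertex (which sits in $B$) is taken as $b$; for $\Theta_{2,k}$ and $\Pi_{2,k}$ the two shared adjacent vertices lie one in $A$ and one in $\{b, b'\}$; and for $\Theta_{3,k}$ and $\Pi_{3,k}$ the two shared non-adjacent vertices are placed into $A$, swapping $A$ and $B$ if needed. Since operations in one copy do not touch other copies, they compose to yield the desired $\Omega_{1,k}$, $\Theta_{1,k}$, or $\Pi_{1,k}$ as a Y-minor.

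The main obstacle is to ensure that the witness $a \in A$ used to add $bb'$ has degree exactly $3$ in the full sum-Kuratowski graph, not merely within its own copy. Shared vertices have higher global degree, so they cannot play this role. In all five cases, however, each $K_{3,3}$-copy contains at least one $A$-vertex that is not shared with any neighboring copy, and such a private vertex has all three of its neighbors in the same copy's $B$-part, so its global degree is indeed $3$.
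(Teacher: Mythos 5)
Your proposal is correct and takes essentially the same approach as the paper: both reduce to replacing each $K_{3,3}$ with a $K_5$ on the same joining vertices, using one private vertex in each part as the degree-$3$ witness for edge additions and then deleting one of them. The only difference is a labeling convention (the paper puts the triangle on the $B$ side and deletes an $A$-vertex, whereas you do the mirror image), which is immaterial.
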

	
	\begin{proof}
		Let $H\in \{\Omega_{2,k}, \Theta_{2,k}, \Theta_{3,k}, \Pi_{2,k}, \Pi_{3,k}\}$.
		It is enough to show that each $K_{3,3}$ of $H$ can be replaced with a $K_5$ that contains the same joining vertices.
		Let $a_1,a_2,a_3,b_1,b_2,b_3$ be the vertices of one such $K_{3,3}$ with bipartition $\{a_1,a_2,a_3\}$, $\{b_1,b_2,b_3\}$ and where neither $a_3$ nor $b_3$ is a joining vertex of $H$.
		Then both $a_3$ and $b_3$ have degree three in $H$.
		Therefore, we can add the edges $a_1a_2,b_1b_2,b_2b_3,b_1b_3$ and remove the vertex $a_3$ to obtained as a Y-minor the graph obtained from $H$ by replacing this $K_{3,3}$ with a $K_5$ containing the same joining vertices.
		Doing this to all copies of $K_{3,3}$ in $H$ gives the desired Y-minor of $H$.
	\end{proof}
	
	Applying \cref{lem:Yminorreductions} to both \cref{thm:connectedRS} and \cref{thm:connGeorgakopoulos}, the final Kuratowski-like theorems that we take forward are the following.
	
	\begin{theorem}\label{thm:Yminorfinite}
		Every connected graph $G$ that does not contain any of $\Omega_{1,k}$, $\Theta_{1,k}$,
		$\Pi_{1,k}$, 
		or $K_{3,k}$ as a Y-minor has Euler genus at most $f(k)$.
	\end{theorem}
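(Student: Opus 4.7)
The plan is to obtain this statement as a short corollary of \cref{thm:connectedRS} together with \cref{lem:Yminorreductions}, arguing by contrapositive. Suppose $G$ is a connected graph that contains some $H \in \mathcal{K}_k$ as an ordinary minor; I will show that $G$ then contains one of $\Omega_{1,k}$, $\Theta_{1,k}$, $\Pi_{1,k}$, or $K_{3,k}$ as a Y-minor. Combined with \cref{thm:connectedRS}, this yields the desired Euler genus bound $f(k)$.

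Two easy observations drive the argument. First, as noted in the paragraph introducing the Y-minor relation (just before \cref{prop:yminorcover}), every minor is in particular a Y-minor, so whenever $G$ contains $H$ as a minor it also contains $H$ as a Y-minor. Second, the Y-minor relation is transitive, since it is defined by closure under a fixed list of operations, and the concatenation of two such sequences of operations is again such a sequence. With these in hand, if $H \in \{\Omega_{1,k}, \Theta_{1,k}, \Pi_{1,k}, K_{3,k}\}$ there is nothing more to prove: $G$ already contains one of the four listed graphs as a Y-minor. If instead $H \in \{\Omega_{2,k}, \Theta_{2,k}, \Theta_{3,k}, \Pi_{2,k}, \Pi_{3,k}\}$, then \cref{lem:Yminorreductions} supplies a Y-minor reduction from $H$ to one of $\Omega_{1,k}$, $\Theta_{1,k}$, or $\Pi_{1,k}$, and by transitivity $G$ still contains one of the four listed graphs as a Y-minor. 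The contrapositive gives the theorem.

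There is no real obstacle at this stage. All of the topological and structural content has already been absorbed into \cref{thm:connectedRS} (which is itself \cref{thm:RS} promoted to the connected setting via \cref{lem2:connect}) and into the elementary vertex-exchange argument in \cref{lem:Yminorreductions} that replaces each $K_{3,3}$ piece by a $K_5$ piece while preserving the joining vertices. Once the transitivity of the Y-minor relation is noted, the present theorem is a one-line deduction.
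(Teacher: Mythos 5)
Your proposal is correct and matches the paper's intended (and essentially unstated) derivation: \cref{thm:connectedRS} excludes all of $\mathcal{K}_k$ as ordinary minors, and \cref{lem:Yminorreductions} together with transitivity of the Y-minor relation lets one drop the $K_{3,3}$-based members of $\mathcal{K}_k$. The explicit contrapositive and the note that minors are Y-minors and that Y-minors compose are exactly the bookkeeping the paper leaves implicit.
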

	
	\begin{theorem}\label{thm:Yminorcountable}
		A countable connected graph $G$ embeds into a compact (orientable) surface if and only if it does not have any of $\Omega_{1,\infty}$, $\Theta_{1,\infty}$,
		$\Pi_{1,\infty}$, or $K_{3,\infty}$ as a Y-minor.
	\end{theorem}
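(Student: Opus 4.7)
The plan is to deduce \cref{thm:Yminorcountable} from \cref{thm:connGeorgakopoulos} together with \cref{lem:Yminorreductions}, treating the two implications separately.

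For the forward direction, I would first verify that the class of graphs embeddable into a compact (orientable) surface is closed under taking Y-minors. The three graph minor operations clearly preserve embeddability, so the only new operation to check is adding an edge $ab$ between two neighbours $a,b$ of a degree-$3$ vertex $v$; here the argument is exactly the one already used in the proof of \cref{prop:yminorcover}, namely that in any 2-cell embedding the three edges at $v$ are cyclically consecutive, so $va$ and $vb$ bound a common face into which $ab$ may be drawn. Since \cref{thm:connGeorgakopoulos} lists $\Omega_{1,\infty}$, $\Theta_{1,\infty}$, $\Pi_{1,\infty}$, and $K_{3,\infty}$ among the graphs that do not embed into any compact surface, none of them can appear as a Y-minor of any $G$ that does embed.

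For the backward direction, I would prove the contrapositive: if $G$ contains as a minor one of the nine graphs $\Omega_{1,\infty}, \Omega_{2,\infty}, \Theta_{1,\infty}, \Theta_{2,\infty}, \Theta_{3,\infty}, \Pi_{1,\infty}, \Pi_{2,\infty}, \Pi_{3,\infty}, K_{3,\infty}$ that appear in \cref{thm:connGeorgakopoulos}, then $G$ already contains one of the four Y-minors $\Omega_{1,\infty}, \Theta_{1,\infty}, \Pi_{1,\infty}, K_{3,\infty}$. Since every minor is a Y-minor and the Y-minor relation is transitive, \cref{lem:Yminorreductions} (which, as stated, handles the case $k=\infty$) immediately gives the needed reductions: a $\Omega_{2,\infty}$-minor yields a $\Omega_{1,\infty}$ Y-minor, $\Theta_{2,\infty}$ and $\Theta_{3,\infty}$-minors each yield a $\Theta_{1,\infty}$ Y-minor, and $\Pi_{2,\infty}$ and $\Pi_{3,\infty}$-minors each yield a $\Pi_{1,\infty}$ Y-minor, while the remaining four cases are immediate. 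Applying \cref{thm:connGeorgakopoulos} then produces the embedding of $G$ into a compact (orientable) surface.

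I do not anticipate a genuine obstacle here: both directions are short bookkeeping once the groundwork (\cref{prop:yminorcover}, \cref{lem:Yminorreductions}, and \cref{thm:connGeorgakopoulos}) is in place. The only subtlety worth flagging explicitly is the need to state that \cref{lem:Yminorreductions} is valid with $k=\infty$ and that the Y-minor relation is transitive, since the proof rests entirely on these two observations combined with the corresponding minor-based theorem.
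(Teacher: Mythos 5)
Your proof is correct and follows the paper's approach, which derives \cref{thm:Yminorcountable} by combining \cref{thm:connGeorgakopoulos} with \cref{lem:Yminorreductions}. You spell out more explicitly than the paper the two points it leaves implicit---that the class of graphs embeddable in a compact surface is closed under taking Y-minors (via the same face-routing argument as in \cref{prop:yminorcover}), and that the Y-minor relation is transitive---but these are exactly the observations the paper's one-line derivation relies on.
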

	
	At this point, we are just missing one more key result connecting the above theorems to bounds on ply for our main results. Let us state this below, leaving the proof to the next section, and then demonstrate how \cref{thm:finply2} and \cref{thm:finply_countable} follow.
	
	\begin{lemma}\label{lem:3main}
		Let $\Sigma$ be a surface of Euler genus $g$ and $H$ be one of $\Omega_{1,k}$, $\Theta_{1,k}$, $\Pi_{1,k}$, or $K_{3,k}$ for some $k \geq 25$.
		Then, every finite $\Sigma$-cover $\widehat{H}$ of $H$ has ply at most $\frac{28g}{k-24}$.
	\end{lemma}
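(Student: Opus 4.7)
For $H = K_{3,k}$, the lemma follows directly from Euler's formula. The cover $\widehat H$ is bipartite (since $K_{3,k}$ is), and so has girth at least~$4$. With $|V(\widehat H)| = (k+3)p$ and $|E(\widehat H)| = 3kp$, the bipartite inequality $|E| \leq 2|V| - 4 + 2g$ becomes
\[ 3kp \leq 2(k+3)p - 4 + 2g, \]
which rearranges to $p(k-6) \leq 2g - 4$, and hence $p \leq \frac{2g}{k-6}$. For every $k \geq 25$ this is stronger than $\frac{28g}{k-24}$.

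For each of the three remaining graphs, which consist of $k$ copies of $K_5$ joined at one or two vertices (or along a path), the girth-$3$ obstacle prevents a direct application of Euler's formula. My plan is to lower-bound $\gamma(\widehat H)$ through the structure of the preimages $\varphi^{-1}(K^i_5)$ of the $K_5$-copies and the way they are glued along the lifts of the joining vertices. For concreteness take $H = \Omega_{1,k}$ with joining vertex $v$, and write $v_1, \ldots, v_p$ for the hubs $\varphi^{-1}(v)$. For each $i \in \{1,\ldots,k\}$, the preimage $G_i = \varphi^{-1}(K^i_5)$ is a $p$-fold cover of $K_5$, and the $G_i$'s share only the hubs. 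I would extract a bipartite (multi)graph minor $M$ of $\widehat H$ by contracting each connected component of $G_i \setminus \varphi^{-1}(v)$ to a single ``blob'' vertex, obtaining a bipartite structure with the hubs on one side and the blobs on the other. When each $G_i \setminus \varphi^{-1}(v)$ is connected and touches every hub, $M$ contains $K_{p,k}$ as a subgraph, and the classical bound $\gamma(K_{m,n}) \geq \tfrac{1}{2}(m-2)(n-2)$ together with minor-monotonicity of Euler genus yields an inequality of the desired form. The cases $\Theta_{1,k}$ and $\Pi_{1,k}$ are treated analogously, with the single joining vertex replaced by the two joining vertices or by the chain of shared vertices.

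The main obstacle is that connected covers of $K_5$ can themselves be planar --- for example, the pentagonal antiprism is a planar $2$-fold cover of $K_5$ --- so the genus of $\widehat H$ need not come from per-copy non-planarity but must instead arise from the global gluing at the joining vertices. To handle this I plan a dichotomy: either the graphs $G_i \setminus \varphi^{-1}(v)$ remain sufficiently connected for the $K_{p,k'}$-minor extraction to go through with $k' \geq k - O(1)$, or else enough preimages split into multiple components that $\widehat H$ admits a decomposition into generalized sums along the joining vertices whose additivity of Euler genus forces $\gamma(\widehat H) \geq p(k-24)/28$ directly. The constants $28$ and $24$ in the statement reflect the accounting in this case-split: $24$ is roughly how many copies of $K_5$ can be ``sacrificed'' to splittings before the minor argument ceases to apply, and $28$ absorbs the slack between the classical $(m-2)(n-2)/2$ bound and the linear-in-$p$ form of the lemma. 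The threshold $k \geq 25$ guarantees that at least one copy of $K_5$ remains after these $O(1)$ losses to drive the induction.
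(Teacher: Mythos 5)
Your $K_{3,k}$ argument is correct and matches the paper's treatment exactly. For $\Omega_{1,k}$, $\Theta_{1,k}$, $\Pi_{1,k}$, however, what you have written is a plan with a genuine gap, not a proof.

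The ``connected'' branch of your dichotomy is fine as far as it goes: if every $G_i\setminus\varphi^{-1}(v)$ is connected, then contracting the $k$ blobs and the $p$ hubs does produce a $K_{p,k}$ minor (each hub has a neighbour in each blob, since the four edges from each hub into $G_i$ all land in the one component), and the Euler genus of $K_{p,k}$ gives $p \leq 2 + \frac{2g}{k-2}$. But the ``many splits'' branch does not work as stated, and it is the hard case. When the preimages split, removing the hubs $\varphi^{-1}(v)$ disconnects $\widehat H$ into many pieces each of which can be planar (your antiprism example shows exactly this), so the genus is \emph{not} distributed among the pieces; it resides entirely in how they are glued back along the $p$ hubs. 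There is no block decomposition here --- the hubs form a $p$-vertex separator, not a cut vertex --- and no clique-sum of bounded order, so no form of Euler-genus additivity applies. You correctly diagnose in the preceding paragraph that ``the genus of $\widehat H$ need not come from per-copy non-planarity but must instead arise from the global gluing,'' but the additivity argument you then propose points the opposite way and would give nothing when the pieces have genus zero. You also give no reason why only $O(1)$ copies can split; without such a bound the $K_{p,k'}$ branch degrades to a useless $k'$.

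The paper avoids this issue by a different dichotomy: not connected versus disconnected blobs, but \emph{non-contractible} versus \emph{contractible} components of the lift of the hanging components. A topological lemma (via Malni\v{c}--Mohar) shows that at most $6g$ components of $\widehat{\mathcal C}$ can be non-contractible: otherwise one finds three disjoint homotopic cycles, two of which bound a cylinder, and inside that cylinder sits a planar cover of a graph containing $\Omega_{1,2}$ or $\Theta_{1,2}$, contradicting the Negami--Archdeacon theorem that these have no finite planar cover. For $\Omega_{1,k}$ one then shows the hanging $K_4$'s cannot lift to contractible components at all (again by trapping a planar cover of $\Omega_{1,k-1}$ inside), so the non-contractible bound controls everything and Euler's formula applied to the contracted bipartite minor finishes the job. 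For $\Theta_{1,k}$ and $\Pi_{1,k}$ contractible lifts of triangles do occur, but can be normalised to be \emph{facial} triangles, after which one carefully counts edges in a bipartite contraction minor. The constants $28$ and $24$ in the statement come from this Euler-formula accounting (the $\Pi_{1,k}$ case in particular needs a homotopy-type pigeonhole on paths through the blobs), not from sacrificing copies of $K_5$ to splittings as you suggest. To make your route work you would need a substitute for the non-contractibility bound that controls the ``many splits'' branch; as written, that branch has no argument behind it.
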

	
	Suppose we fix a surface $\Sigma$ of Euler genus $g$. To prove \cref{thm:finply2}, choose $c = f(25)$ (where $f(k)$ is the function in \cref{thm:RS}) and let $G$ be a connected graph with Euler genus $h \geq c$ and a finite $p$-ply $\Sigma$-cover. By \cref{thm:Yminorfinite}, $G$ must contain a Y-minor $H$ that is isomorphic to one of $\Omega_{1,k}$, $\Theta_{1,k}$, $\Pi_{1,k}$, or $K_{3,k}$ where $k = f^{-1}(h) \geq 25$. In all cases, we know from \cref{prop:yminorcover} that $H$ also has a $p$-ply $\Sigma$-cover, so by \cref{lem:3main} we must have $p \le \frac{28g}{k-24} = \frac{28g}{f^{-1}(h)-24}$. \cref{thm:finply2} now follows assuming \cref{lem:3main} since $f^{-1}$ is increasing and $\lim_{h\to \infty} f^{-1}(h) = \infty$.
	
	Following the same outline for \cref{thm:finply_countable}, let $G$ be a connected countable graph that does not embed in a compact surface. By \cref{thm:Yminorcountable}, this is true if and only if $G$ contains some $H \in \{\Omega_{1,\infty}, \Theta_{1,\infty}, \Pi_{1,\infty}, K_{3,\infty}\}$ as a Y-minor, but then taking $k\to \infty$ in \cref{lem:3main} implies that $H$ (and hence $G$, by \cref{prop:yminorcover}) has no finite ply $\Sigma$-cover for any given compact surface $\Sigma$. Hence $G$ has no finite ply cover that embeds into a compact surface.
	
	\section{Bounding ply for excluded Y-minors}
	\label{sec:sumkuratowski}
	
	The goal of this section is to prove \cref{lem:3main}. This boils down to showing that if $H$ is one of the excluded Y-minors in \cref{thm:Yminorfinite} or \cref{thm:Yminorcountable}, then the ply of any $\Sigma$-cover $\widehat{H}$ of $H$ is suitably bounded in terms of the Euler genus of $\Sigma$. In all cases, the basic idea is that, since covers are bijective on neighbourhoods, one can easily relate the ply to the number of edges in $\widehat{H}$ and number of vertices in $\widehat{H}$. Euler's formula provides a means to bound these quantities, but to apply it effectively, we will need to consider embedded multigraphs and their minors.

	A \defn{face} of a multigraph $G$ embedded in a surface $\Sigma$ is a component of $\Sigma - G$ homeomorphic to an open disk. A \defn{2-face} is a face bounded by just 2 edges.
	We say that two edges in an embedded multigraph are \defn{parallel} if they form a 2-face in the embedding.
	In this section, when taking minors of embedded graphs, we only delete multiedges in the (multi)graph after an edge contraction so that there is no pair of parallel edges (in other words, so that the embedded (multigraph) has no 2-face).
	This is to ensure that good bounds can be obtained from Euler's formula, as below (see Section 3.1 of \cite{mohar2001graphs}).

	\begin{theorem}\label{Euler}
		Let $G$ be a $n$-vertex multigraph embedded on a surface of Euler genus $g$ without 2-faces.
		Then $|E(G)|\le 3n-6+3g$.
	\end{theorem}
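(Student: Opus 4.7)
My plan is to combine Euler's formula with a double-count of face-boundary lengths, which is the classical route to a Heawood-type bound. Let $m = |E(G)|$ and let $f$ denote the number of faces of the embedding. The first step would be to invoke the inequality form of Euler's formula: for any embedding of a graph in a surface of Euler genus $g$, one has $n - m + f \geq 2 - g$, with equality precisely when every component of $\Sigma - G$ is an open disk (i.e., the embedding is cellular).

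The second step would be to bound $f$ in terms of $m$ using the no-2-face hypothesis. Each edge is traversed exactly twice along face boundaries, yielding the double-count identity $\sum_F \deg(F) = 2m$, where $\deg(F)$ is the boundary-walk length of the face $F$. Since there are no 2-faces and, after harmlessly subdividing any loops, no 1-faces, every face satisfies $\deg(F) \geq 3$, so $3f \leq 2m$ and thus $f \leq 2m/3$. Substituting into the Euler inequality gives $n - m + 2m/3 \geq 2 - g$, which rearranges to the claimed bound $m \leq 3n - 6 + 3g$.

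The main obstacle is twofold. First, one must justify the inequality form of Euler's formula when the embedding is not cellular; this is done by refining the embedding with additional edges that cut any non-disk component of $\Sigma - G$ into disks, chosen so as to avoid introducing any new 2-faces (which is possible because each non-disk component has enough topological room, and the bound for the refinement implies the bound for $G$). Second, one must ensure $\deg(F) \geq 3$ for every face: the no-2-face assumption precludes length-2 boundaries, while a preliminary loop subdivision eliminates any length-1 boundaries. Small or degenerate cases (such as $n \leq 2$) can either be verified directly or are vacuous, and the argument extends to disconnected $G$ componentwise.
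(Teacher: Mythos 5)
The paper itself contains no proof of this statement: it is quoted as a standard consequence of Euler's formula from Section 3.1 of \cite{mohar2001graphs}, and your outline (Euler's formula plus the double count $\sum_F \deg(F)=2|E(G)|$ with all face degrees at least $3$) is exactly that standard route, so in spirit you and the cited source agree.

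Two of your patches, however, do not work as written, and you should realise they cannot be made to work without strengthening the hypotheses (which is harmless here, since the paper only ever applies the bound to loopless multigraphs with many vertices). First, subdividing loops does not dispose of faces of size $1$: if a loop bounds a $1$-face, one subdivision turns that face into a $2$-face, so the degree-at-least-$3$ claim still fails, and in any case transferring the bound back from the subdivided graph loses an additive $2$ per subdivided loop, so it does not give the stated constant. Indeed no repair is possible: a single vertex with a contractible loop in the sphere has $n=m=1$, $g=0$ and no $2$-faces, yet violates $m\le 3n-6+3g=-3$; likewise an edgeless single vertex in the sphere shows that the cases $n\le 2$ are not ``verified directly'' but simply excluded by convention. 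Second, in this paper a face is by definition a \emph{disk} component of $\Sigma-G$, so the no-$2$-face hypothesis says nothing about non-disk components of the complement, and those can have boundary degree $2$ (take a separating $2$-cycle in the orientable genus-$2$ surface, with a once-punctured torus on either side); hence the shortcut ``inequality form of Euler's formula plus $\deg(F)\ge 3$ for every face'' is not literally valid. Your fallback is the right fix and should be made the actual argument: add edges (possibly essential loops, when a non-disk component meets $G$ only in vertices) inside each non-disk component so that the embedding becomes cellular without creating any face of boundary length at most $2$, apply Euler's formula with equality and the degree count to the refined graph $G'$, and conclude via $|E(G)|\le|E(G')|$ together with $|V(G')|=n$.
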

	
	\begin{theorem}\label{EulerBi}
		Let $G$ be a $n$-vertex bipartite multigraph embedded on a surface of Euler genus $g$ without 2-faces.
		Then $|E(G)|\le 2n-4+2g$.
	\end{theorem}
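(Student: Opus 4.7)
The plan is to mirror the proof of the non-bipartite bound in \cref{Euler}, replacing the generic face-length lower bound of $3$ by the sharper lower bound of $4$ that follows from bipartiteness together with the no-2-face hypothesis. The three ingredients are the Euler-genus inequality, a face-length lower bound, and a standard double-count of edge-face incidences.

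First, I would recall the Euler-genus inequality: for any embedding of $G$ in $\Sigma$, with $f$ denoting the number of (disk) faces as defined in the excerpt, one has $n - m + f \geq 2 - g$. For a cellular embedding of a connected graph this is the equality $n - m + f = \chi(\Sigma) = 2 - g$; in the general case one extends the embedding to a cellular one by adding edges inside the non-disk components of $\Sigma \setminus G$ (each added edge increases $m$ by $1$ and $f$ by at most $1$, so $n - m + f$ can only decrease), and a disconnected $G$ introduces only additional slack without affecting the direction of the inequality.

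Next, I would use bipartiteness to bound face lengths from below. Since $G$ is bipartite, every closed walk in $G$ has even length, so in particular every face boundary walk has even length. Loops, which would give $1$-faces, are excluded by bipartiteness, and $2$-faces are excluded by hypothesis. Hence every face has boundary walk of length at least $4$. A standard double-count then gives
$$4 f \;\leq\; \sum_{F \text{ a face}} \ell(F) \;\leq\; 2 m,$$
since each edge lies on at most two face-sides (with multiplicity), so $f \leq m/2$. Substituting into the Euler inequality yields $2 - g \leq n - m + f \leq n - m/2$, which rearranges to $m \leq 2n - 4 + 2g$, as required.

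There is no essential obstacle here; this is a textbook argument and is essentially the bipartite counterpart of \cref{Euler}. The only point requiring any care is the validity of the Euler inequality for embeddings that need not be $2$-cell, which is standard and is covered in \cite{mohar2001graphs}.
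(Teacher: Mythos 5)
Your proposal is correct and is essentially the same argument the paper relies on: the paper does not prove \cref{EulerBi} itself but cites Section 3.1 of \cite{mohar2001graphs}, where the bound is obtained exactly as you do, via the Euler(-genus) inequality combined with the observation that bipartiteness and the absence of 2-faces force every face boundary walk to have length at least $4$, followed by the edge--face double count. The only caveat, shared by the statement itself rather than introduced by your proof, is that degenerate faces of boundary length less than $4$ (e.g.\ around isolated vertices or bridges) must be implicitly excluded, which is harmless in all applications in the paper.
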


	This is already enough to handle the easiest graph in our collection; $K_{3,k}$.
	
	\begin{lemma}\label{lem:K3}
		Let $\Sigma$ be a surface of Euler genus $g$ and let $k\ge 7$.
		If $K_{3,k}$ has a finite $p$-ply $\Sigma$-cover, then $p \le \frac{2g}{k-6}$.
	\end{lemma}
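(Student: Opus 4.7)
The plan is a direct edge-count argument using Euler's formula for bipartite embedded graphs (\cref{EulerBi}), which applies cleanly here because covers of bipartite graphs are bipartite and covers of simple graphs are simple. There should be no real obstacle; the lemma is essentially just bookkeeping.

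First I would let $\widehat{G}$ be a finite $p$-ply $\Sigma$-cover of $K_{3,k}$ via $\varphi : V(\widehat{G}) \to V(K_{3,k})$. Since covers are bijective on neighbourhoods, we immediately have $|V(\widehat{G})| = p(k+3)$ and $|E(\widehat{G})| = 3pk$ (each vertex $v$ of $K_{3,k}$ has exactly $p$ preimages, and each edge $uv$ of $K_{3,k}$ lifts to exactly $p$ edges, one for each vertex in $\varphi^{-1}(u)$, paired with its unique neighbour in $\varphi^{-1}(v)$). The cover $\widehat{G}$ is simple and bipartite: the $2$-colouring of $K_{3,k}$ pulls back to a $2$-colouring of $\widehat{G}$, and if $\widehat{G}$ contained parallel edges at some vertex $\widehat{v}$ then these would map to parallel edges in $K_{3,k}$, contradicting the bijectivity of $\varphi$ on $N(\widehat{v})$. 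In particular, the embedding of $\widehat{G}$ in $\Sigma$ has no $2$-faces.

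Next I would apply \cref{EulerBi} to the embedded bipartite (simple) multigraph $\widehat{G}$, obtaining
\[
3pk \;=\; |E(\widehat{G})| \;\leq\; 2|V(\widehat{G})| - 4 + 2g \;=\; 2p(k+3) - 4 + 2g.
\]
Rearranging this inequality gives $p(k-6) \leq 2g - 4 < 2g$. Since $k \geq 7$ by hypothesis, $k-6 \geq 1$ and we may divide to conclude $p \leq \frac{2g}{k-6}$, as required.
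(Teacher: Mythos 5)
Your proof is correct and follows essentially the same argument as the paper: compute $|V(\widehat{G})|$ and $|E(\widehat{G})|$ from the ply, observe that the cover is simple and bipartite so \cref{EulerBi} applies with no $2$-faces, and rearrange. The only (cosmetic) difference is that you carry the $-4$ term a bit longer before discarding it, whereas the paper drops it immediately; both give the same bound.
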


	\begin{proof}
		Let $\widehat{H}$ be a $p$-ply cover of $K_{3,k}$ embedded in a surface of Euler genus $g$.
		Then $|V(\widehat{H})|=p(k+3)$ and $|E(\widehat{H})|=3pk$.
		Since $K_{3,k}$ is bipartite, so is $\widehat{H}$.
		Also, since $\widehat{H}$ is a simple graph, the embedding of $\widehat{H}$ has no 2-faces.
		Therefore, by \cref{EulerBi}, we have $|E(\widehat{H})| \le 2|V(\widehat{H})| - 4 + 2g$, and so $3pk\le 2p(k+3) - 4 +2g\le 2p(k+3) +2g$.
		This gives $p\le \frac{2g}{k-6}$, as desired.
	\end{proof}
	
	Surfaces are locally planar by definition, so if we `cut up' the cover $\widehat{H}$, we can obtain planar covers of minors of $H$. A rather useful fact is that none of these can contain $\Omega_{1,2}$ nor $\Theta_{1,2}$ as a minor by the following result of Negami \cite{negami1988} and Archdeacon (see \cite{hlineny201020} for a sketch of the proof).
	
	\begin{theorem}%[Negami \cite{negami1988} and Archdeacon]
		\label{noplanarcover}
		Neither $\Omega_{1,2}$ nor $\Theta_{1,2}$ have finite planar covers.
	\end{theorem}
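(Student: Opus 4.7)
The plan is to argue by contradiction, separately for each of the two graphs. Suppose $H \in \{\Omega_{1,2}, \Theta_{1,2}\}$ admits a finite planar cover $\widehat{H}$ of ply $p$ via a covering map $\vphi$, and fix a planar embedding of $\widehat{H}$ in the sphere. The case $p=1$ is immediate: $\widehat{H}$ is isomorphic to $H$, which contains $K_5$ as a subgraph and so is non-planar by Kuratowski's theorem. So assume $p \ge 2$.

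The core idea is to exploit the small separator in $H$ given by its joining vertices. Let $J \subseteq V(H)$ be the set of joining vertices ($|J|=1$ for $\Omega_{1,2}$ and $|J|=2$ for $\Theta_{1,2}$) and write $\widehat{J} = \vphi^{-1}(J)$. At each vertex $\widehat{j} \in \widehat{J}$, the incident edges partition naturally into \emph{A-edges} and \emph{B-edges} according to which of the two $K_5$-copies of $H$ they project into. The preimage $\vphi^{-1}(H - J)$ splits into components, each of which is a planar cover of either $K_5^A \setminus J$ or $K_5^B \setminus J$. So $\widehat{H}$ is the union of two planar subgraphs $\widehat{H}_A,\widehat{H}_B$ that project to $K_5^A,K_5^B$ respectively and share only the vertices in $\widehat{J}$.

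I would then run a case analysis on the cyclic ordering at each $\widehat{j} \in \widehat{J}$. In the \textbf{separated case}, where at every $\widehat{j}$ the A-edges and B-edges each form a contiguous arc in the rotation, one performs a local ``cut'' at each $\widehat{j}$, splitting it into two vertices (one inheriting the A-edges, the other the B-edges) and introducing a new face between them. This reorganises the sphere into two planar pieces, one containing $\widehat{H}_A$ and the other $\widehat{H}_B$, glued along these new faces. Using the fact (provable by a direct argument with Euler's formula) that the only planar covers of $K_5$ of small ply are 3-connected (e.g.\ the pentagonal antiprism) and hence have an essentially unique embedding by Whitney's theorem, one can analyse the resulting face structure to show that contracting the preimages of a single $j \in J$ produces a planar embedding of $K_5$, contradicting Kuratowski's theorem. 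In the \textbf{interleaved case}, where at some $\widehat{j}$ the A- and B-edges alternate (or more generally, fail to be separable), one uses the interleaving to trace three internally disjoint paths through $\widehat{H}_A$ and $\widehat{H}_B$ between carefully chosen preimage pairs, exhibiting an explicit topological $K_{3,3}$- or $K_5$-subdivision in $\widehat{H}$, which again contradicts planarity.

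For $\Theta_{1,2}$ the same framework applies but with two joining vertices, so in addition to the cyclic-order analysis at each $\widehat{j}$, one tracks how the preimages of $u$ and $v$ are matched via the lifts of the edge $uv$, giving a matching-like pairing structure between $\vphi^{-1}(u)$ and $\vphi^{-1}(v)$ that constrains the global embedding. The main obstacle in both cases is the rigorous execution of the topological cut-and-paste in the separated case---showing that one can genuinely realise the preimages of $J$ on a common face of each piece---and the bookkeeping of the paths in the interleaved case, which becomes delicate as $p$ grows.
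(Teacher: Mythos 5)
Note first that the paper does not prove \cref{noplanarcover}: it cites the result to Negami \cite{negami1988} (for $\Omega_{1,2}$) and Archdeacon (for $\Theta_{1,2}$), pointing to Hlin{\v{e}}n{\`y}'s survey \cite{hlineny201020} for a sketch. So there is no in-paper proof to compare against; I can only assess your sketch on its own terms.

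Your framework---split $\widehat{H}$ along the preimage of the joining vertices and analyse the rotations there---is a sensible starting direction, but as written there are genuine gaps that keep it from being a proof. The assertion that planar covers of $K_5$ of ``small ply'' are $3$-connected with an essentially unique embedding is carrying most of the weight yet is neither proved nor stated in a form you can actually use: a putative cover of $\Omega_{1,2}$ or $\Theta_{1,2}$ could have arbitrarily large ply, so controlling only small-ply covers of $K_5$ (let alone just the pentagonal antiprism) does not close the argument. The conclusion of your separated case---that contracting $\varphi^{-1}(j)$ yields a planar embedding of $K_5$---also does not follow from what is written: identifying $p\ge 2$ vertices of a plane graph need not preserve planarity unless they share faces in a compatible way, and establishing exactly that is the content of the step being skipped. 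The interleaved case is only a gesture (``trace three internally disjoint paths'' to extract a Kuratowski subdivision), and you yourself concede the bookkeeping ``becomes delicate as $p$ grows''---which is precisely where the argument must succeed, since $p$ is unbounded. A complete proof needs a mechanism that works uniformly in $p$ (the known arguments lean on Euler-formula and face-structure constraints for planar covers of $K_5$, and on where preimages of the joining vertices can lie on those faces); that uniform control is what your outline is missing.
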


	A component of an embedded graph is \defn{non-contractible} if it contains a non-contractible cycle.
	If a component is not non-contractible, then it is \defn{contractible}.
	Recall that the hanging components of a sum-Kuratowski graph $H$ are the components left when all joining vertices are deleted. 
	For $\Omega_{1,k}$, $\Theta_{1,k}$, and $\Pi_{1,k}$, we will first work toward a bound on ply when most vertices are in non-contractible components of the lift of hanging components.
	The next lemma relies on topological facts that appear in a paper of Malni\v{c} and Mohar \cite{malnivc1992generating}. We defer to their paper for topological definitions.
	
	\begin{lemma}\label{lem:boundingnoncontractible}
		Let $\Sigma$ be a surface of Euler genus $g$.
		For any $k\ge 4$, let $H\in \{\Omega_{1,k}, \Theta_{1,k}, \Pi_{1,k} \}$ and let $\mathcal{C}$ be the hanging components of $H$. If $\widehat{H}$ is a finite $\Sigma$-cover of $H$, then $\widehat{\mathcal{C}}$ has at most $6g$ components that are non-contractible in $\Sigma$.
	\end{lemma}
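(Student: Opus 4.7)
The strategy is a proof by contradiction: assume $\widehat{\mathcal{C}}$ has more than $6g$ components that are non-contractible in $\Sigma$, and derive an impossibility. From each such component $C_i$ I would pick a non-contractible simple closed cycle $\gamma_i$ of the embedded graph $\widehat{H}$; since the $C_i$ are pairwise vertex-disjoint subgraphs, the images of the $\gamma_i$ form a family of pairwise disjoint non-contractible simple closed curves on $\Sigma$.

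The first substantive step is a topological pigeonhole. Using the pants-decomposition bounds in the spirit of Malni\v{c} and Mohar \cite{malnivc1992generating}, any family of pairwise disjoint, pairwise non-homotopic essential simple closed curves on a surface of Euler genus $g$ has at most $3g$ members. Hence among the $\gamma_i$ at least three must share a common free homotopy class; call them $\gamma_a$, $\gamma_b$, $\gamma_c$, nested so that $\gamma_b$ lies in the open annulus $A$ that $\gamma_a$ and $\gamma_c$ cobound.

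The second step pins the middle component and its graph neighbourhood inside $A$. Because $C_b$ is connected, vertex-disjoint from $C_a \cup C_c$, and meets $A$ at $\gamma_b$, the whole of $C_b$ sits in $A$. Any edge of $\widehat{H}$ incident to $C_b$ has its far endpoint either in $C_b$ or in the lifted joining-vertex set $\widehat{J}$, and in particular not on $\gamma_a \cup \gamma_c$; since edges in a graph embedding meet each other only at common vertices, no such edge can cross $\gamma_a$ or $\gamma_c$. Thus the closed neighbourhood $C_b^+$ of $C_b$ in $\widehat{H}$ (that is, $C_b$ together with all lifted joining vertices adjacent to $C_b$ and the edges between) is embedded entirely inside $A$, and is therefore planar.

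The third and hardest step is to convert this planarity into a contradiction via the sum-Kuratowski structure of $H$. For each of $\Omega_{1,k}$, $\Theta_{1,k}$, $\Pi_{1,k}$, every hanging component $X$ together with its adjacent joining vertices in $H$ forms a copy of $K_5$. Tracking the covering map, $C_b^+$ combined with the portions of $C_a$ and $C_c$ whose vertices lie on $\gamma_a$, $\gamma_c$ (to which the joining-vertex lifts inside $A$ are also adjacent) assembles into a finite planar cover of $\Omega_{1,2}$ or $\Theta_{1,2}$ sitting inside the annular region. Since \cref{noplanarcover} rules out any finite planar cover of either $\Omega_{1,2}$ or $\Theta_{1,2}$, this is the desired contradiction, and so we conclude at most $6g$ non-contractible components.

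The principal obstacle is this last step: explicitly exhibiting a planar cover of $\Omega_{1,2}$ or $\Theta_{1,2}$ inside the annulus. The earlier topological and connectivity arguments are essentially formal, but the combinatorial reconstruction of a sum-Kuratowski cover demands careful bookkeeping of how joining-vertex lifts in $A$ attach to $C_a$, $C_b$, $C_c$, and requires treating the three families $\Omega_{1,k}$, $\Theta_{1,k}$, $\Pi_{1,k}$ separately. The hypothesis $k \geq 4$ is used to guarantee enough hanging components are globally present to support the reconstruction.
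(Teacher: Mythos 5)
Your topological setup---choosing one non-contractible cycle per non-contractible component, applying the Malni\v{c}--Mohar bound of $3g$ on disjoint pairwise non-homotopic essential cycles, extracting three homotopic ones $\gamma_a,\gamma_b,\gamma_c$, and nesting them so that $\gamma_b$ sits inside the open annulus $A$ cobounded by $\gamma_a$ and $\gamma_c$---is correct and is exactly what the paper does (via Propositions 3.2, 3.5, 3.7 of \cite{malnivc1992generating}). The observation that $C_b$ and its neighbouring joining-vertex lifts are trapped inside $A$ is also sound.

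The final step, however, has a genuine gap, and the object you are trying to build a planar cover from is too small. You propose to assemble a planar cover of $\Omega_{1,2}$ or $\Theta_{1,2}$ from $C_b^+$ together with ``portions of $C_a$ and $C_c$ whose vertices lie on $\gamma_a, \gamma_c$.'' But $C_a$ and $C_c$ are not inside the annulus (they sit on or beyond its boundary), they project onto \emph{hanging components} rather than joining vertices, and in general a lifted joining vertex inside $A$ need not be adjacent to any vertex of $C_a$ or $C_c$ at all. Moreover, the lifted joining vertices inside $A$ are adjacent to many \emph{other} lifted hanging components inside $A$, not just $C_b$, and you have discarded these. There is no reason a patchwork of $C_b^+$ plus scraps of $C_a,C_c$ should have the covering property for $\Omega_{1,2}$ or $\Theta_{1,2}$.

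The move that closes the gap is to look not at $C_b^+$ but at the entire connected component $K$ of the lift $\widehat{H \setminus (C_a' \cup C_c')}$ that contains a lifted joining vertex $\widehat{x}$ inside $A$, where $C_a', C_c'$ are the hanging components of $H$ onto which $C_a, C_c$ project. Since $H \setminus (C_a' \cup C_c')$ is connected, $K$ is a genuine cover of it; since $K$ is vertex-disjoint from $C_a$ and $C_c$ (hence from $\gamma_a$ and $\gamma_c$), $K$ cannot cross the annulus boundary, so $K$ lies entirely inside $A$ and is therefore planar. Finally, because $k \geq 4$, deleting at most two hanging components from $\Omega_{1,k}$, $\Theta_{1,k}$, or $\Pi_{1,k}$ still leaves a connected graph containing $\Omega_{1,2}$ or $\Theta_{1,2}$ as a minor, so $H \setminus (C_a' \cup C_c')$ admits no finite planar cover by \cref{noplanarcover}. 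This contradiction is what the $k \geq 4$ hypothesis is really for---not to ``support the reconstruction,'' but to ensure $H \setminus (C_a' \cup C_c')$ still has no planar cover.
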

	
	\begin{proof}
		Take an embedding of $\widehat{H}$ in $\Sigma$, and suppose for a contradiction that $\widehat{\mathcal{C}}$ has at least $6g+1$ non-contractible components.
		For each non-contractible component $C$ of $\widehat{\mathcal{C}}$, let $J_C$ be a non-contractible cycle contained in $\widehat{\mathcal{C}}$.
		Then let $\mathcal{J}$ be the collection of these non-contractible cycles, noting that the elements in $\mathcal{J}$ are necessarily disjoint.
		A surface of Euler genus $g$ cannot contain $3g+1$ disjoint pairwise non-homotopic non-contractible cycles \cite[Proposition 3.7]{malnivc1992generating}.
		Therefore, $\mathcal{J}$ contains three pairwise disjoint homotopic non-contractible cycles $J_1,J_2,J_3$.
		Since these cycles are disjoint and homotopic, they are two-sided \cite[Proposition 3.2]{malnivc1992generating}.
		After possibly relabelling $J_1$, $J_2$, and $J_3$, it follows that $\Sigma$ contains a cylinder $\mathbb{S}^1 \times [0,1]$ with $J_2$ in its interior and whose boundary components $\mathbb{S}^1 \times \{0\}$ and $\mathbb{S}^1 \times \{1\}$ consist of $J_1$ and $J_3$ respectively \cite[Proposition 3.5]{malnivc1992generating}.
		
		For each $1\le i \le 3$, let $C_i$ be the hanging component of $H$ such that $J_i$ is contained in the lift of $C_i$.
		Since $k\geq 4$, the subgraph $H \backslash \{C_1 \cup C_3\}$ contains $\Omega_{1,2}$ or $ \Theta_{1,2}$ as a minor, and therefore has no finite planar cover by \cref{noplanarcover}.
		Let $x$ be a joining vertex of $H$ adjacent to a vertex of $C_2$.
		Then, there is a vertex $\widehat{x}$ in the interior of the cylinder adjacent to a vertex of $\widehat{C_2}$ (containing $J_2$).
		Since $H\backslash \{C_1 \cup C_3\}$ is connected and $x \in V(H\backslash \{C_1 \cup C_3\})$, it follows that one component of the lift of $H\backslash \{C_1 \cup C_3\}$ is contained in the cylinder.
		But this component is a cover of $H\backslash \{C_1 \cup C_3\}$, which is a contradiction.
	\end{proof}

	\begin{lemma}
		\label{lem:finply_noncontractible}
		Let $\Sigma$ be a surface of Euler genus $g$, let $k\ge 5$, let $H\in \{\Omega_{1,k},  \Theta_{1,k},  \Pi_{1,k} \}$, and let $\mathcal{C}$ be its hanging components. If $\widehat{H}$ is a finite $p$-ply $\Sigma$-cover of $H$ and at least half of the vertices of $\widehat{\mathcal{C}}$ are contained in non-contractible components, then $p\le \frac{28g}{k -4}$.
	\end{lemma}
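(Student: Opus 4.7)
The plan is to build an auxiliary bipartite multigraph $B$ embedded in $\Sigma$ by contracting to single vertices both the non-contractible components of $\widehat{\mathcal C}$ and the connected components of the induced subgraph $\widehat H[\widehat J]$, and then to match the bipartite Euler bound of \cref{EulerBi} against a lower bound on the edge count of $B$ coming from the hypothesis.

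For the setup, let $\mathcal N$ denote the set of non-contractible components of $\widehat{\mathcal C}$. By \cref{lem:boundingnoncontractible}, $|\mathcal N|\leq 6g$, and by hypothesis $\sum_{Y\in\mathcal N}|V(Y)|\geq p\,|V(\mathcal C)|/2$, where $|V(\mathcal C)|\geq 3k$ in each of the three cases. Each $Y\in\mathcal N$ is a connected component of $\widehat H[V(\widehat{\mathcal C})]$, so $\varphi$ restricts to a cover of some hanging component $C\subset H$ with $|V(C)|\leq 4$. Similarly, $\widehat H[\widehat J]$ is a cover of the subgraph of $H$ induced on its joining vertices; this subgraph is empty for $\Omega_{1,k}$, a single edge for $\Theta_{1,k}$, and a path for $\Pi_{1,k}$. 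In each case $\widehat H[\widehat J]$ consists of exactly $p$ connected components (singletons, matching edges, and disjoint paths respectively).

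To construct $B$, take as left vertex set the $p$ connected components of $\widehat H[\widehat J]$ contracted to points, and as right vertex set $\{y_Y:Y\in\mathcal N\}$, with one (multi-)edge of $B$ for each edge of $\widehat H$ between $\widehat J$ and $V(\mathcal N)$. The embedding of $\widehat H$ in $\Sigma$ induces an embedding of $B$, and after deleting parallel edges one obtains a simple bipartite graph $B^*$ embedded in $\Sigma$ with no $2$-faces. Since $|V(B^*)|\leq p+6g$, \cref{EulerBi} gives $|E(B^*)|\leq 2p+14g$. For the lower bound on $|E(B^*)|$, every vertex of $V(\mathcal N)$ contributes at least one edge of $\widehat H$ into $\widehat J$, so the total number of multi-edges in $B$ is at least $|V(\mathcal N)|$; and the multiplicity of any single multi-edge is capped by an absolute constant (at most $4$ for $\Omega_{1,k}$, since each joining-vertex lift has at most $4$ neighbours in any one $V(Y)$, and at most $6$ for the other two cases, where the contracted left vertex may have two lifts adjacent to a common hanging component). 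Combining this with the Euler bound and the lower bound on $|V(\mathcal N)|$ from the hypothesis rearranges to $p(k-4)\leq 28g$ in the $\Omega_{1,k}$ and $\Theta_{1,k}$ cases (with the gain in multiplicity being compensated by the fact that, in the $\Theta_{1,k}$ case, each vertex of $V(\mathcal N)$ contributes \emph{two} edges to $\widehat J$).

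The hardest step is handling the $\Pi_{1,k}$ case, where naively plugging in $|V(\mathcal C)|=3k+2$, maximum multiplicity $6$, and the uniform lower bound on $|V(\mathcal N)|$ gives only $p\leq 28g/(k-c)$ for a constant $c$ slightly larger than $4$. To reach exactly $28g/(k-4)$ one has to split $\mathcal N$ into components covering the $k-2$ interior $K_3$-hanging components and those covering the two endpoint $K_4$-hanging components; the latter collectively contribute at most $8p$ vertices to $|V(\mathcal N)|$, and for the former the stronger simple-edge count $|V(Y)|/3$ is available (using that each interior vertex contributes \emph{two} edges to $\widehat J$ against maximum multiplicity $6$). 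Executing this bookkeeping carefully is the main obstacle in the proof.
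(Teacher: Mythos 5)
Your construction is essentially the same minor that the paper builds: you contract the lifts of the joining structure and the non-contractible components of $\widehat{\mathcal C}$, delete the contractible components, and then play the bipartite Euler bound of \cref{EulerBi} against a lower bound on the edge count. The paper's version is $H^*$ in the proof of \cref{lem:finply_noncontractible}, and your $B^*$ differs only in that you pass to the simplification rather than merely deleting $2$-face-forming parallel edges; since that only lowers your edge count it is harmless for the direction you need. Where you diverge from the written proof is that you make the multiplicity bookkeeping explicit. The paper compresses this into the one-line assertion that ``the average degree of the vertices $\{x_i\}$ is at least $k/2$,'' which, if one unwinds it, amounts to exactly the ``at least one surviving edge per adjacent pair, multiplicity at most $4$ (for $\Omega_{1,k}$) or $6$'' calculation you spell out. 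For $\Omega_{1,k}$ and $\Theta_{1,k}$ your arithmetic reproduces $|E|\ge kp/2$ and hence the stated $p\le 28g/(k-4)$ on the nose.

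The $\Pi_{1,k}$ case is where both you and the paper are on thinner ice, and you are right to flag it. With the split you propose (set aside the $\le 8p$ vertices lying over the two endpoint $K_4$'s, and for the interior $K_3$'s use two incident edges against multiplicity $6$) one gets $|E(B^*)|\ge p(3k-14)/6$, and feeding this into \cref{EulerBi} yields $p\le 84g/(3k-26)=28g/(k-26/3)$, not $28g/(k-4)$. You acknowledge that ``executing this bookkeeping carefully is the main obstacle,'' but the obstacle is real: the refinement you describe does not close the gap to $k-4$, and I do not see a way to do so along these lines without an extra idea (e.g.\ a cleverer treatment of the endpoint $K_4$'s, or a sharper face-count than simplification). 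That said, the shortfall is not fatal to the paper's downstream results, since \cref{lem:3main} only claims $28g/(k-24)$ and $28g/(k-26/3)\le 28g/(k-24)$; indeed the paper's own ``average degree $\ge k/2$'' step appears to overstate what the multiplicity argument delivers for $\Pi_{1,k}$ in exactly the same way. So: your proof is correct and essentially complete for $\Omega_{1,k}$ and $\Theta_{1,k}$, but you should either prove the stated constant for $\Pi_{1,k}$ or note explicitly that you only obtain the weaker $28g/(k-26/3)$, which still suffices for \cref{lem:3main}.
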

	
	\begin{proof}
		Let $C_1,\ldots, C_r$ be the non-contractible components of $\widehat{\mathcal{C}}$.
		By \cref{lem:boundingnoncontractible}, we have that $r\le 6g$.
		Let $P$ be the path of joining vertices of $H$, and let $P_1,\ldots, P_p \subset \widehat{H}$ be the components of the lift of $P$.
		Consider the minor $H^*$ of $\widehat{H}$ obtained by performing the following operations in order:
		\begin{enumerate}
			\item Delete all of the contractible components of $\widehat{\mathcal{C}}$;
			\item For each $1\le i \le r$, contract the component $C_i$ to a vertex $c_i$;
			\item For each $1\le i \le p$, contract the path $P_i$ to a vertex $x_i$.
		\end{enumerate}
		The resulting minor $H^*$ embedded in $\Sigma$ is bipartite with bipartition $\{c_i : 1\le i \le r\} \cup \{x_i : 1\le i \le p\}$, and we can see that $|V(H^*)|=r+p\le p+6g$. By \cref{EulerBi}, we then have $|E(H^*)|\le 2(p+6g) -4 +2g= 2p -4 +14g$.
		Since at least half of the vertices of $\widehat{\mathcal{C}}$ are contained in non-contractible components, the average degree of the vertices $\{x_i : 1\le i \le p\}$ is at least $k/2$.
		Thus, $|E(H^*)|\ge pk/2$, so $pk/2 \le 2p -4 +14g \le 2p +14g$.
		With $k\geq 5$, this gives $p\le \frac{28g}{k -4}$ as claimed.
	\end{proof}
	
	The preceding statement now allows us to handle $\Omega_{1,k}$ straightforwardly.
	
	\begin{lemma}
		\label{lem:omega}
		Let $\Sigma$ be a surface of Euler genus $g$ and let $k\ge 5$.
		If $\Omega_{1,k}$ has a finite $p$-ply $\Sigma$-cover, then $p\le \frac{28g}{k -4}$.
	\end{lemma}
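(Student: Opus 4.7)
The plan is to apply \cref{lem:finply_noncontractible} to $H=\Omega_{1,k}$. When at least half of the vertices of $\widehat{\mathcal{C}}$ lie in non-contractible components of the lift, that lemma immediately yields $p\le\frac{28g}{k-4}$: note that for $\Omega_{1,k}$ the joining path $P=\{v\}$ is trivial, so each $P_i=\{\widehat{v}_i\}$ already collapses to a single vertex $x_i$ in the minor $H^*$, and the hypotheses go through verbatim. The remaining task is thus to rule out the ``contractible majority'' case in which more than half of the vertices of $\widehat{\mathcal{C}}$ lie in contractible components, by deriving a contradiction from \cref{noplanarcover}.

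The key structural fact I would use is that each $\widehat{v}_i\in\varphi^{-1}(v)$ is adjacent to exactly one component of $\widehat{C}_\ell$ for every hanging component $C_\ell$ of $\Omega_{1,k}$, since the four neighbours of $\widehat{v}_i$ in $\widehat{C}_\ell$ form a $K_4$ and therefore lie in a single component. Each $\widehat{v}_i$ thus has exactly $k$ ``arms''. A straightforward averaging argument using the contractible-majority hypothesis (and the identity $\sum_C p_C=pk$ summed over all components $C$ of $\widehat{\mathcal{C}}$) then produces some $\widehat{v}_i$ with at least $\lceil k/2\rceil\ge 3$ arms lying in contractible components $C^1,C^2,C^3$, contained in disks $D^1,D^2,D^3\subset\Sigma$. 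Moreover $\widehat{v}_i\notin D^j$ for any $j$: otherwise the $K_5$ spanned by $\widehat{v}_i$ together with its four neighbours in $C^j$ would embed in a disk, contradicting the non-planarity of $K_5$.

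To finish, the goal is to use this configuration to exhibit a planar cover of $\Omega_{1,2}$, contradicting \cref{noplanarcover}. Mirroring the cylinder strategy of \cref{lem:boundingnoncontractible}, I would aim to enlarge the union of two of the disks together with $\widehat{v}_i$ and the edges connecting them into a planar region of $\Sigma$, and then read off the desired cover as the connected component of the lift of $\{v\}\cup C_{\ell_1}\cup C_{\ell_2}$ lying inside this region. The main obstacle is precisely this last topological step: since $\widehat{v}_i$ together with two of the disks need not lie in a common disk of $\Sigma$ a priori, one must leverage the remaining contractible arms (which are plentiful under the contractible-majority hypothesis, so a suitable choice of a pair can be made) to guarantee that the resulting region is simply connected and captures a full connected cover of $\Omega_{1,2}$.
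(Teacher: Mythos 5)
Your overall strategy (apply \cref{lem:finply_noncontractible}, then rule out the case where a majority of $\widehat{\mathcal{C}}$'s vertices lie in contractible components via \cref{noplanarcover}) matches the paper's, but the paper's way of ruling out contractible components is both simpler and stronger, and your version has two real problems.

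\emph{The paper's argument.} Rather than ruling out only a ``contractible majority,'' the paper shows that \emph{no} component of $\widehat{\mathcal{C}}$ is contractible. Each hanging component $C$ of $\Omega_{1,k}$ is a $K_4$, so any component $G$ of $\widehat{C}$ is a connected $3$-regular cover of $K_4$ and hence not outerplanar. If $G$ were contractible it would lie in a disk; then some vertex of $G$ avoids the outer face of $G$, and its unique neighbour $\widehat{x}$ projecting onto the joining vertex $v$ is also strictly interior. The component of the lift of $\Omega_{1,k}\setminus C = \Omega_{1,k-1}$ containing $\widehat{x}$ cannot cross the boundary of the disk, so it is a planar cover of $\Omega_{1,k-1}\supseteq\Omega_{1,2}$, contradicting \cref{noplanarcover}. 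No averaging and no juggling of several disks is needed; one contractible component already suffices.

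\emph{Issues with your version.} First, the supporting claim that ``the four neighbours of $\widehat{v}_i$ in $\widehat{C}_\ell$ form a $K_4$ and therefore lie in a single component'' is not justified: a cover is only a bijection on neighbourhoods, and the four lifts of $N(v)\cap C_\ell$ adjacent to a fixed $\widehat{v}_i$ need not be pairwise adjacent, nor land in a single component of $\widehat{C}_\ell$ (one can build a $2$-ply cover of $K_5$ in which $\widehat{v}_1$ has two neighbours in each of two distinct $K_4$-components of the $K_4$-fibre). So ``each $\widehat{v}_i$ has exactly $k$ arms'' does not hold as stated, and the averaging step needs to be reformulated. Second, and more seriously, the final topological step you flag — enlarging the union of $\widehat{v}_i$ with two disks into a simply connected region capturing a full cover of $\Omega_{1,2}$ — is a genuine gap and there is no evident way to fill it directly; $\widehat{v}_i$ together with two disjoint disks it touches need not lie in a common disk of $\Sigma$, and having more contractible arms available does not by itself force such a planar region to exist. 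The paper avoids this entirely by locating the planar cover \emph{inside a single disk} bounded by the outer face of one contractible component, rather than trying to assemble one from pieces.
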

	
	\begin{proof}
		Let $\widehat{H}$ be a $p$-ply cover of $\Omega_{1,k}$ embedded in $\Sigma$, and let $\vphi$ be the covering map.
		By \cref{lem:finply_noncontractible}, it is enough to show that no hanging component of $H$ lifts to a contractible cycle.
		Suppose otherwise, so there is a subgraph $G \subset \widehat{H}$ that is contractible in $\Sigma$ and $\vphi(G) = C$ for some hanging component $C$ of $H$. It follows that $G$ is a cover of $K_4$.
		Let $X \subset V(G)$ be the vertices incident to the outer face containing the rest of the surface.
		Since $G$ is a cover of $K_4$, it is $3$-regular and hence not outerplanar, so there exists a vertex $v\in V(G)\backslash X$ with $\vphi(v)=c$ for some $c\in C$.
		Therefore, there is a vertex $x$ in the interior of the embedding of $G$ such that $\vphi(x)$ is the joining vertex in $\Omega_{1,k}$.
		This means that some component of the lift of $\Omega_{1,k}\backslash C= \Omega_{1,k-1}$ is contained in the interior of the embedding of $G$.
		But this contradicts \cref{noplanarcover}, which states that $\Omega_{1,2}$ has no finite planar cover.
	\end{proof}

	The remaining two sum-Kuratowski graphs can have hanging components whose lifts are contractible, so we need an extra step of analysing the contractible components. In particular, we show that the contractible components of the lifts of hanging components (which are now triangles) can be assumed to be facial triangles. 
	
	\begin{lemma}\label{facial}
		Let $\Sigma$ be a surface of Euler genus $g$, let $k\ge 3$, and
		let $H\in \{\Theta_{1,k}, \Pi_{1,k} \}$.
		If $H$ has $p$-ply $\Sigma$-cover, then there is a $p$-ply $\Sigma$-cover $\widehat{H}$ of $H$ such that for every hanging triangle $C$ of $H$, every contractible component of $\widehat{C}$ is a facial triangle.
	\end{lemma}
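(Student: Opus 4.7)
The plan is to start from an arbitrary $p$-ply $\Sigma$-cover $\widehat{H}$ of $H$ and modify it in two stages: first show that every contractible component $\Gamma$ of $\widehat{C}$ bounds a face of $\widehat{H}$, then locally rewire inside that face to split any overlong $\Gamma$ into several facial triangles.

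For the first stage, let $\Delta \subseteq \Sigma$ be the disk bounded by a contractible component $\Gamma$ of $\widehat{C}$. Mimicking \cref{lem:omega}, I would show that $\Delta^\circ$ contains no vertex of $\widehat{H}$. If some $\hat{v} \in \Delta^\circ$ maps to a non-joining vertex of $H$, then $\hat{v}$ has a neighbour $\hat{x}$ mapping to a joining vertex (every non-joining vertex of a hanging component of $H$ is adjacent in $H$ to a joining vertex), and $\hat{x}$ must also lie in $\Delta^\circ$ because edges of $\widehat{H}$ cannot cross $\Gamma$ and the vertices of $\Gamma$ map only to $V(C)$. So it suffices to derive a contradiction under the assumption that some $\hat{v} \in \Delta^\circ$ maps to a joining vertex of $H$: the component of the lift of $H \setminus V(C)$ containing $\hat{v}$ is then confined to $\Delta^\circ$ and so gives a planar cover of $H \setminus V(C)$. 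For $k \geq 3$, the graph $H \setminus V(C)$ is connected and contains $\Theta_{1,2}$ (when $H = \Theta_{1,k}$) or $\Omega_{1,2}$ (when $H = \Pi_{1,k}$) as a minor, contradicting \cref{noplanarcover}. Since $\widehat{C}$ is $2$-regular and $\Gamma$ is one of its components, $\Gamma$ has no chord edges in $\widehat{H}$ either, so $\Delta$ must be a face.

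For the second stage, suppose $\Gamma$ has length $3\ell > 3$, and label its vertices cyclically as $v_1, \ldots, v_{3\ell}$ so that $(v_{3i-2}, v_{3i-1}, v_{3i})$ maps bijectively onto $V(C)$ for each $i$. I would delete the $\ell$ edges $v_{3i}v_{3i+1}$ of $\Gamma$ (indices mod $3\ell$) and add the $\ell$ chord edges $v_{3i-2}v_{3i}$, drawing each new chord inside $\Delta$ in a small sector near the boundary arc $v_{3i-2}v_{3i-1}v_{3i}$. Covering is preserved because each affected vertex still has exactly one neighbour in each of the other two fibres of $\widehat{C}$ (only the specific partner changes), and the chords lie in pairwise disjoint sectors of the disk $\Delta$ and so do not interact with the embedding outside $\Delta$; the $3\ell$-cycle $\Gamma$ is thereby replaced by $\ell$ facial triangles. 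Iterating this over all contractible components of lifts of all hanging triangles --- which by the first stage live in pairwise disjoint closed disks --- yields the desired cover. The step demanding most care is this realization check: the boundary edge deletions merge $\Delta$ with adjacent faces and the inserted chords must sit coherently in the resulting embedding, but since everything is local to a topological disk, the required non-crossing drawing of the chords reduces to the elementary fact that $\ell$ pairwise disjoint arcs can be drawn in a disk between consecutive triples on its boundary.
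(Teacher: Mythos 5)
Your proposal is correct and follows essentially the same approach as the paper: both proofs show that a contractible component of $\widehat{C}$ must bound a face (by arguing that any vertex in the enclosed disk would force a planar cover of $H\setminus V(C)\supseteq_{\mathrm{minor}} \Theta_{1,2}$ or $\Omega_{1,2}$, contradicting \cref{noplanarcover}), and then rewire the cycle inside that face into consecutive facial triangles by swapping every third boundary edge for a short chord. The only cosmetic difference is that the paper phrases the argument extremally (pick a cover minimizing the number of offending components and derive a contradiction from any remaining one), whereas you perform the rewiring on all offending cycles simultaneously, which your stage-one conclusion about disjoint, internally empty disks does justify.
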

	
	\begin{proof}
		Choose $\widehat{H}$ to be a $p$-ply $\Sigma$-cover of $H$ minimising the number of contractible components of $\widehat{C}$ that are not facial triangles, and let $\vphi$ be the covering map.
		Suppose for contradiction that some contractible component $B$ of $\widehat{C}$ is not a facial triangle.
		This $B$ is a cover of a hanging triangle $C \subset H$, say with $V(C) = \{x_1, x_2 , x_3\}$, so $B$ is a cycle with vertices $y_1,\ldots , y_{3r}$ in order where for every pair $1\le i \le 3$ and $1\le j \le r$ we have $\vphi(y_{3(r-1) + i}) = x_i$.
		
		We claim that $B$ is a facial cycle.
		If not, then $B$ is a separating cycle and therefore there is a vertex $v$ of $\widehat{H}$ in the interior of $B$ that projects onto a joining vertex of $H$.
		Then in the interior of $B$ there is planar component of $\widehat{H}$ that covers $H\backslash C$.
		Since $H\backslash C$ contains either $\Theta_{1,2}$ or $\Omega_{1,2}$ as a minor, this contradicts \cref{noplanarcover} and so proves the claim.
		
		Now, consider $\widehat{H^*}=(\widehat{H} \backslash \{y_{3j}y_{3j+1} : 1\le j \le r\} )\cup \{y_{3(j-1)+1}y_{3j} : 1\le j \le r\}$. Since $B$ is facial, we observe that $\widehat{H^*}$ is a $p$-ply $\Sigma$-cover of $H$ with fewer contractible components of $\widehat{C}$ that are not a facial triangles.
		This contradicts our choice of $\widehat{H}$.
	\end{proof}
	
	We are now ready to handle the last two sum-Kuratowski graphs; $\Theta_{1,k}$ and then $\Pi_{1,k}$.
	
	\begin{lemma}
		\label{lem:theta}
		Let $\Sigma$ be a surface of Euler genus $g$ and let $k\ge 9$.
		If $\Theta_{1,k}$ has a finite $p$-ply $\Sigma$-cover, then $p\le \frac{28g}{k -8}$.
	\end{lemma}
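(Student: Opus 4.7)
Following the template of Lemma~\ref{lem:omega}, first apply Lemma~\ref{facial} to fix a $p$-ply $\Sigma$-cover $\widehat{H}$ of $\Theta_{1,k}$ in which every contractible component of $\widehat{\mathcal{C}}$ is a facial triangle in the embedding. Let $\mathcal{C}$ be the hanging triangles and split into two cases based on whether at least half of the vertices of $\widehat{\mathcal{C}}$ lie in non-contractible components.

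If the non-contractible components dominate, then Lemma~\ref{lem:finply_noncontractible} applies directly, giving $p \leq \tfrac{28g}{k-4} \leq \tfrac{28g}{k-8}$ since $k \geq 9$, and the proof is finished in this case.

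Otherwise, at least half the vertices of $\widehat{\mathcal{C}}$ lie in facial triangle hanging components, so the number $a$ of such components satisfies $a \geq \tfrac{kp}{2}$. Note that the desired bound $p \leq \tfrac{28g}{k-8}$ is equivalent to $pk \leq 8p + 28g$, and using $pk \leq 2a$ from the case hypothesis, it suffices to establish the upper bound $a \leq 4p + 14g$. The approach is then to form a bipartite minor $M$ of $\widehat{H}$ by contracting each facial triangle to a single vertex $d_T$, each non-contractible component to a vertex $c_C$ (with at most $6g$ of them, by Lemma~\ref{lem:boundingnoncontractible}), and each $P_i$ (the edge lift $\widehat{u}_i\widehat{v}_i$) to a vertex $x_i$. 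This gives a bipartite embedded multigraph with bipartition $\{d_T\} \cup \{c_C\}$ versus $\{x_i\}$, to which \cref{EulerBi} can be applied. Combining this Euler bound with a lower bound on the number of surviving edges (counted via the $6a$ external joining edges of facial triangles and the $2|C|$ joining edges of each non-contractible $C$, after removing $2$-faces) should yield the target inequality on $a$.

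The principal obstacle is the edge-count argument in $M$. When contracting facial triangles, multiple external edges from the three vertices of a single triangle can point to the same $x_i$ (and similarly after contracting $P_i$), and these parallel edges may collapse under $2$-face removal as mandated by the convention of Section~\ref{sec:sumkuratowski}. The hard work lies in showing that enough of them survive: I expect this to rest on a local analysis around each facial triangle, together with Lemma~\ref{noplanarcover} to rule out degenerate configurations (for instance, two facial triangles sharing both joining-vertex lifts $\widehat{u}_i, \widehat{v}_i$ would yield a planar $\Theta_{1,2}$-cover, contradicting Lemma~\ref{noplanarcover}).
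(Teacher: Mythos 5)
Your setup is right: apply \cref{facial}, split on whether the non-contractible components of $\widehat{\mathcal{C}}$ carry at least half the vertices (which gives $p\le\frac{28g}{k-4}\le\frac{28g}{k-8}$ via \cref{lem:finply_noncontractible}), and in the other case reduce to bounding the number $a$ of facial-triangle components by $4p+14g$. This matches the paper's structure, and the reduction to $a\le 4p+14g$ via $pk\le 2a$ is sound (the paper in fact gets the stronger $a\le 4p+2g-4$).

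However, the core of the proof — the edge count — is both unfinished and, as set up, will not close. The problem is your choice to \emph{contract} each lift $P_i$ of the joining edge $xy$ to a single vertex $x_i$. The paper instead \emph{deletes} the edges that project onto $xy$ and keeps the $p$ lifts of $x$ and the $p$ lifts of $y$ as $2p$ distinct vertices, giving a bipartite minor with bipartition $\{x_1,\dots,x_p,y_1,\dots,y_p\}$ against $\{c_1,\dots,c_r\}$. In that graph, each contracted facial triangle $c_i$ has at least one neighbour among the $x_j$ and at least one among the $y_\ell$, so degree $\ge 2$ automatically; and if $c_i$ had exactly two distinct neighbours $\widehat{x}_j,\widehat{y}_\ell$, then $C_i\cup\{\widehat{x}_j,\widehat{y}_\ell\}$ would be a copy of $K_5-e$ embedded with $C_i$ (the triangle on the three degree-$4$ vertices) as a facial triangle, which is impossible in a planar embedding of $K_5-e$. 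This forces degree $\ge 3$ and hence $|E(G)|\ge 3r$, which is exactly what \cref{EulerBi} needs.

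Your contraction of $P_i$ throws away precisely the distinction that makes this argument work. After contracting, the lift $\widehat{x}_j$ and the lift $\widehat{y}_{\sigma(j)}$ it is joined to become the same vertex, so a facial triangle whose three outgoing $x$-edges and three outgoing $y$-edges all land on $\widehat{x}_{j_1},\widehat{x}_{j_2},\widehat{y}_\ell$ (degree $3$ in the paper's graph) may end up with only two distinct neighbours in your $M$ whenever $\widehat{y}_\ell$ merges with $\widehat{x}_{j_1}$ or $\widehat{x}_{j_2}$. A guaranteed degree of $2$ is useless: with $|V(M)|\le p+a+6g$, \cref{EulerBi} would give $2a\le 2(p+a+6g)-4+2g$, which is vacuous. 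So you would need degree $\ge 3$ in $M$, and you do not have a mechanism to rule out the merger — the paper's $K_5-e$ argument applies to the uncontracted picture, and your proposed invocation of \cref{noplanarcover} (ruling out two facial triangles with the same pair of joining-vertex lifts) addresses a different degeneracy and does not bound the degree of an individual $d_T$. The fix is to follow the paper: delete the $xy$-lift rather than contract it, keep lifts of $x$ and $y$ separate, and use the facial-triangle/$K_5-e$ observation to get the degree-$3$ lower bound. You should also note that the paper simply deletes the non-contractible components rather than contracting them, which avoids accounting for their edges at all.
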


	\begin{proof}
		Let $\widehat{H}$ be a $p$-ply cover of $\Theta_{1,k}$ embedded in $\Sigma$.
		Let $\mathcal{C}$ be the hanging triangles of $\Theta_{1,k}$.
		By \cref{facial}, we may assume that for every hanging triangle $C$ of $H$, every contractible component of $\widehat{C}$ is a facial triangle.
		By \cref{lem:finply_noncontractible}, we may assume that at least half of the vertices of $\widehat{\mathcal{C}}$ are contained in contractible components.
		
		Let $x$ and $y$ be the joining vertices of $\Theta_{1,k}$.
		Let $x_1, \ldots , x_p$ be the vertices of $\widehat{H}$ that project onto $x$ and let $y_1, \ldots , y_p$ be the vertices of $\widehat{H}$ that project onto $y$.
		Let $C_1,\ldots , C_r$ be the contractible (facial triangle) components of $\widehat{\mathcal{C}}$.
		Then $kp/2 \le r$.
		Now, let $G$ be the embedded graph obtained from $\widehat{H}$ by deleting all non-contractible components of $\widehat{\mathcal{C}}$, deleting each edge that projects onto $xy$, and then contracting each $C_i$ to a vertex $c_i$.
		Then $G$ is bipartite with $|V(G)|=2p+r$.
		
		Each $c_i$ is adjacent to vertices of the form $x_j$ and $y_k$, and so certainly has minimum degree at least 2.
		If $c_i$ has degree exactly 2 for some $i$, then $\widehat{H}$ would contain an embedding of $K_5-e$ on vertex set $C_i\cup \{x_j,y_k\}$ where $C_i$ is a facial triangle, $x_iy_k$ is not an edge, and there is no non-contractible cycle.
		But there is no such planar embedding of $K_5 - e$, so we may conclude that every $c_i$ has minimum degree at least 3. This implies that $|E(G)|\ge 3r$.
		
		Plugging into \cref{EulerBi}, we obtain $3r \le 2(2p+r)-4+2g$, so $r\le 4p+2g-4$, and therefore $kp/2 \le 4p+2g-4 \le 4p+2g$.
		Since $k > 8$, this gives $p\le \frac{4g}{k-8} \le \frac{28g}{k-8}$, as desired.
	\end{proof}

	\begin{lemma}
		\label{lem:pi}
		Let $\Sigma$ be a surface of Euler genus $g$ and let $k\ge 25$.
		If $\Pi_{1,k}$ has a finite $p$-ply $\Sigma$-cover, then $p\le \frac{28g}{k-24}$.
	\end{lemma}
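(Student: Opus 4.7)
The plan is to follow the outline of \cref{lem:theta}, with extra care for the two hanging $K_4$'s attached at the endpoints of $\Pi_{1,k}$. Let $\widehat{H}$ be a finite $p$-ply $\Sigma$-cover of $\Pi_{1,k}$ and write $\mathcal{C}$ for its hanging components. By \cref{facial}, I may assume every contractible component of the lift of a hanging triangle is a facial triangle. If at least half of the vertices of $\widehat{\mathcal{C}}$ lie in non-contractible components, \cref{lem:finply_noncontractible} immediately gives $p \le 28g/(k-4) \le 28g/(k-24)$, so assume the reverse. Since the two hanging $K_4$'s contribute at most $8p$ vertices to $\widehat{\mathcal{C}}$ while the $k-2$ hanging triangles contribute $3(k-2)p$, at least $(3k-14)p/2$ vertices lie in contractible triangle lifts, yielding at least $r \ge (3k-14)p/6$ facial triangles.

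I now build a bipartite minor $G$ of $\widehat{H}$ to apply \cref{EulerBi}. The direct analogue of the $\Theta$ proof -- contracting each lift $P_l$ of the joining path $P = w_1 \cdots w_{k-1}$ to a single vertex -- only gives minimum degree $2$ for the $c_j$, since a facial triangle could route its six outgoing edges to a single $P_l$-lift via a $3$-$2$-$1$ split across the two joining vertices of its $K_5^i$ without producing either $K_5$ or $K_5 - e$. To circumvent this, I split each $P_l$ by the parity of the index of its joining vertices and contract the odd-indexed part to $x_l^{\mathrm{odd}}$ and the even-indexed part to $x_l^{\mathrm{even}}$. After also deleting the non-contractible components, the hanging $K_4$ components, and the $P$-projecting edges, and contracting each facial triangle to a vertex $c_j$, the resulting bipartite $G$ has $|V(G)| = 2p + r$.

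The main obstacle is establishing minimum degree $3$ for the $c_j$ in $G$. The intermediate $K_5^i$ containing a given facial triangle has joining vertices $w_{i-1}$ and $w_i$ of opposite parities, so each $c_j$ contributes three edges to $x$-vertices of each parity class. If the degree of $c_j$ were only $2$ in $G$, all three $w_{i-1}$-edges would have to collapse to a single $\widehat{w}_{i-1}^{(l_1)}$ and all three $w_i$-edges to a single $\widehat{w}_i^{(l_2)}$; then $c_j \cup \{\widehat{w}_{i-1}^{(l_1)}, \widehat{w}_i^{(l_2)}\}$ spans an embedded $K_5$ if $l_1 = l_2$ (using the edge $\widehat{w}_{i-1}^{(l)}\widehat{w}_i^{(l)}$ lying in $P_l$) or an embedded $K_5 - e$ if $l_1 \ne l_2$, in both cases with $c_j$ as a facial triangle -- exactly the configuration ruled out by the planarity argument of \cref{lem:theta}.

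With minimum degree $3$ in hand, \cref{EulerBi} gives $3r \le |E(G)| \le 2(2p+r) - 4 + 2g$, hence $r \le 4p + 2g$. Combined with $r \ge (3k-14)p/6$, this yields $(3k-38)p \le 12g$, so $p \le 12g/(3k-38)$. A quick check verifies $12g/(3k-38) \le 28g/(k-24)$ for all $k \ge 11$, comfortably covering the hypothesis $k \ge 25$.
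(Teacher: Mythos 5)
The central gap is in your construction of the bipartite minor $G$. You propose to ``split each $P_l$ by the parity of the index of its joining vertices and contract the odd-indexed part to $x_l^{\mathrm{odd}}$ and the even-indexed part to $x_l^{\mathrm{even}}$.'' But the odd-indexed vertices of $P_l$ form an independent set in the path (consecutive vertices on $P_l$ have indices of opposite parity), and once the path edges and $K_4$-hangers are deleted they remain in different components of what is left: any connection between two odd-indexed lifts would have to pass through a contracted triangle $c_j$, and each such $c_j$ is adjacent only to lifts of $w_{i-1}$ and $w_i$, which have opposite parity. So the preimage of $x_l^{\mathrm{odd}}$ is disconnected, and collapsing it to a single vertex is an identification, not an edge contraction. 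The resulting $G$ is therefore not a minor of $\widehat{H}$ and need not be embeddable in $\Sigma$, so \cref{EulerBi} cannot be applied to it. Everything downstream of this step, including the minimum-degree-$3$ claim, rests on this invalid operation.

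Your diagnosis of the difficulty is correct: after contracting each $P_l$ to a single vertex, a contracted facial triangle $c_j$ can legitimately have degree $2$ (and even degree $1$ is ruled out only by a separate $K_5$ argument), so the clean $\Theta_{1,k}$-style bound does not go through. The paper, however, does not try to force degree $3$ in $G_1$. Instead it partitions the $c_i$ into those of degree $\geq 3$ (call it $X$) and those of degree exactly $2$ (call it $Y$), handles the case $|X|\geq r/2$ with \cref{EulerBi} on $G_1$ directly, and in the case $|Y|\geq r/2$ passes to a further minor $G_2$ on the $p$ contracted path-vertices obtained by absorbing each degree-$2$ vertex of $Y$ into one of its neighbours. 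The key step is a lower bound $|E(G_2)|\geq |Y|/2$, proved by a homotopy pigeonhole: if too many edges coincide under the no-2-face convention, three paths $p_1' c_i' p_2'$ would be homotopic, yielding a planar subgraph of $\widehat{H}$ containing a $K_5$ minor. Your write-up has no analogue of this second minor or the homotopy argument, and would need one to treat the degree-$2$ triangles; a parity split does not provide a substitute because it is not realizable by minor operations.
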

	
	\begin{proof}
		Let $\widehat{H}$ be a $p$-ply cover of $\Pi_{1,k}$ embedded in $\Sigma$.
		Let $\mathcal{C}$ be the hanging triangles of $\Pi_{1,k}$.
		By \cref{facial} and \cref{lem:finply_noncontractible}, we may assume that for every hanging triangle $C$ of $H$, every contractible component of $\widehat{C}$ is a facial triangle, and at least half of the vertices of $\widehat{\mathcal{C}}$ are contained in contractible components.
		
		Let $P$ be the joining path of $\Pi_{1,k}$.
		Then, $\widehat{P}$ consists of $p$ paths $P_1, \ldots , P_p$ isomorphic to $P$.
		Let $C_1,\ldots , C_r$ be the contractible (facial triangle) components of $\widehat{\mathcal{C}}$.
		Then $kp/2 \le r$.
		Consider the minor $G_1$ of $\widehat{H}$, obtained from $\widehat{H}$ by performing the following operations:
		\begin{enumerate}
			\item delete all the non-contractible components of $\widehat{\mathcal{C}}$;
			\item for each $1\le i \le r$, contract the component $C_i$ to a vertex $c_i$;
			\item for each $1\le i \le p$, contract the path $P_i$ to a vertex $p_i$.
		\end{enumerate}
		It is clear that $G_1$ is bipartite and $|V(G_1)| = p+r$.
		Since $K_5$ is non-planar, it follows that each $c_i$ has degree at least two in $G_1$.
		Partition $\{c_1, \ldots , c_r\}$ into two sets: $X$ consisting of those with degree at least three, and $Y$ consisting of those with degree two.
		If $|X|\ge r/2\ge kp/4$, then $|E(G_1)|\ge 2r+|X|\ge 5r/2$.
		By \cref{EulerBi}, this gives $|E(G_1)|\le 2(p+r) +2g -4$, so $kp/4 \le r/2 \le 2p +2g -4 \le 2p +2g$. Hence, we have $p\le \frac{8g}{k-8} \le \frac{28g}{k-24}$ as desired.
		
		Otherwise, we may assume that $|Y|\ge r/2\ge kp/4$.
		In this case, let $G_2$ be the minor of $G_1$ obtained by deleting the vertices of $X$, and contracting one of the two edges incident to $c$ for every $c\in Y$.
		Then $V(G_2)=\{p_1, \ldots , p_p\}$ is a set of size $p$.
		Suppose for sake of contraction that $|E(G_2)| < |Y|/2$.
		Then there exist vertices $c_1',c_2',c_3'\in Y$ and $p_1',p_2'\in \{p_1, \ldots , p_p\}$ of $G_1$ such that the three paths $p_1'c_1'p_2'$, $p_1'c_2'p_2'$, and $p_1'c_3'p_2'$ all have the same homotopy type.
		This implies that in $\widehat{H}$ there is a planar subgraph $J$ consisting of
		\begin{itemize}
			\item two paths, say $x_1\ldots x_{k+1}$ and $y_1\ldots y_{k+1}$,
			\item three triangles, say $T_1$, $T_2$, and $T_3$,
			\item for some $1\le a_1\le a_2\le a_3 \le k$, we have that for each $1\le i \le 3$, $T_i$ is contained in the neighbourhoods of both $\{x_{a_i},y_{a_i}\}$ and $\{x_{a_i+1},y_{a_i+1}\}$, and furthermore each of $x_{a_i},y_{a_i}, x_{a_i+1},y_{a_i+1}$ have one or two neighbours in $T_i$.
		\end{itemize}
		Observe that in $J$, there is a path $Q_1$ between $x_{a_2}$ and $y_{a_2}$ on the vertex set $T_1\cup \{x_1,\ldots , x_{a_2}\} \cup \{y_1,\ldots , y_{a_2}\}$, and another path $Q_3$ between $x_{a_2+1}$ and $y_{a_2+1}$ on the vertex set $T_3\cup \{x_{a_2+1},\ldots , x_{k+1}\} \cup \{y_{a_2+1},\ldots , y_{k+1}\}$.
		By contracting the paths $Q_1$ and $Q_3$, we obtain $K_5$ as a minor of $J$. This contradicts the fact that $J$ is planar.
		Thus, $|E(G_2)| \ge |Y|/2 \ge r/4 \ge kp/8$.
		By \cref{Euler} we then have $kp/8 \le 3p +3g -6 \le 3p +3g$. This gives $p\le \frac{24g}{k-24} \le \frac{28g}{k-24}$, as desired.
	\end{proof}
	
	\cref{lem:3main} follows immediately from \cref{lem:K3}, \cref{lem:omega}, \cref{lem:theta}, and \cref{lem:pi}.
	
	\section*{Acknowledgements}
	
	We thank Nathan Bowler, Jim Geelen, Paul Seymour, and Sebastian Wiederrecht for helpful discussions leading us to \cref{thm:RS} and \cref{thm:Georgakopoulos}.

\end{document}